\documentclass[a4paper,11pt, oneside]{amsart}

\usepackage[cp1250]{inputenc}
\usepackage{amsmath}
\usepackage{amssymb,amsbsy,amsmath,amsfonts,amssymb,amscd}
\usepackage{latexsym}
\usepackage{amsthm}
\usepackage{mathrsfs}
\usepackage{graphics}
\usepackage{color}
\usepackage{tikz}
\usetikzlibrary{arrows}
\usepackage{tikz-cd}
\usepackage{pdfpages} 
\usepackage{longtable}
\usepackage{xy}
\usepackage{chngcntr}
\usepackage{array}
\usepackage{color}
\usepackage{comment}
\usepackage{enumitem}
\usepackage{geometry}
\usepackage{setspace}
\usepackage{multirow}
\geometry{a4paper,left=20mm,right=20mm, top=3cm, bottom=3cm} 
\usepackage[pagebackref=true]{hyperref}
\usepackage{textcmds} 

\usepackage[alphabetic,backrefs,lite]{amsrefs} 

\input xy
\xyoption{all}

\newcommand\sO{{\mathcal O}}
\newcommand\sC{{\mathcal C}}

\newcommand\sN{{\mathcal N}}
\newcommand\sK{{\mathcal K}}


\newcommand{\CC}{\ensuremath{\mathbb{C}}}
\newcommand{\RR}{\ensuremath{\mathbb{R}}}
\newcommand{\ZZ}{\ensuremath{\mathbb{Z}}}

\newcommand{\PP}{\ensuremath{\mathbb{P}}}


\newcommand\la{\lambda}
\newcommand\Lam{\Lambda}

\newcommand\ze{\zeta}

\DeclareMathOperator{\Fix}{Fix}

\DeclareMathOperator{\Stab}{Stab}

\DeclareMathOperator{\PGL}{PGL}
\DeclareMathOperator{\AGL}{AGL}

\DeclareMathOperator{\Aut}{Aut}
\DeclareMathOperator{\End}{End}

\DeclareMathOperator{\Sing}{Sing}
\DeclareMathOperator{\diag}{diag}

\DeclareMathOperator{\Jac}{Jac}
\DeclareMathOperator{\id}{id}
\DeclareMathOperator{\Heis}{He}
\DeclareMathOperator{\He}{He}

\DeclareMathOperator{\im}{im}

\DeclareMathOperator{\GL}{GL}

\DeclareMathOperator{\Bihol}{Bihol}
\DeclareMathOperator{\Deck}{Deck}
\DeclareMathOperator{\SL}{SL}

\def\eea{\end{eqnarray*}}
\def\bea{\begin{eqnarray*}}

\newcommand\dual{\mathrel{\raise3pt\hbox{$\underline{\mathrm{\thinspace d
\thinspace}}$}}}
\newcommand\qe{\ifhmode\unskip\nobreak\fi\quad $\Box$}       

\def\BOX{\hfill\lower.5\baselineskip\hbox{$\Box$}}

\newcommand{\bigslant}[2]{{\raisebox{.2em}{$#1$}/\raisebox{-.2em}{$#2$}}}

\DeclareMathOperator{\SheafHom}{\mathscr{H}\text{\kern -4pt {\textit{om}}}\,}
\DeclareMathOperator{\SheafExt}{\mathscr{E}\text{\kern -3pt {\textit{xt}}}\,}

\newtheorem{theorem}{Theorem}
\newtheorem{theo}[theorem]{Theorem}
\newtheorem{remark}[theorem]{Remark}
\newenvironment{rem}{\begin{remark}\rm}{\end{remark}}

\newtheorem{prop}[theorem]{Proposition}
\newtheorem{cor}[theorem]{Corollary}
\newtheorem{lemma}[theorem]{Lemma}
\newtheorem{example}[theorem]{Example}

\numberwithin{theorem}{section}
\numberwithin{equation}{section}

\theoremstyle{definition}
\newtheorem{defin}[theorem]{Definition}
\newenvironment{definition}{\begin{defin}\rm}{\end{defin}}

\setlength{\parindent}{0pt} 



\makeatletter
\def\tagform@#1{\maketag@@@{\ignorespaces#1\unskip\@@italiccorr}}
\makeatother



\newcolumntype{H}{@{}>{\lrbox0}l<{\endlrbox}} 
\newcommand{\mylabel}[2]{#2\def\@currentlabel{#2}\label{#1}}

\hypersetup{colorlinks=true, unicode=true, linkcolor=[rgb]{0.10,0.05,0.67}, citecolor=[rgb]{0.10,0.05,0.67}, filecolor=[rgb]{0.10,0.05,0.67}, urlcolor=[rgb]{0.10,0.05,0.67}}

\begin{document}

\title[Crystallographic Groups and Calabi-Yau 3-folds]{Crystallographic Groups and Calabi-Yau 3-folds of Type $\mathrm{III}_0$}
\author{ Christian Glei\ss ner and Julia Kotonski}
\address{Christian Gleissner and Julia Kotonski  \newline University of Bayreuth, Universit\"atsstr. 30, D-95447 Bayreuth, Germany}
\email{christian.gleissner@uni-bayreuth.de,  julia.kotonski@uni-bayreuth.de}

\thanks{
\textit{2020 Mathematics Subject Classification.} Primary: 14J30, 14J32, 14L30,  Secondary: 20C15, 20H15, 32Q25.\\
\textit{Keywords}: Calabi-Yau three-folds, crystallographic groups, torus quotients \\
\textit{Acknowledgements:} The authors would like to thank Ingrid Bauer, Fabrizio Catanese and Andreas Demleitner
 for useful comments and discussions. The second author is supported by the \qq{Studienstiftung des deutschen Volkes}.
}

\begin{abstract}
We provide a fine classification of Gorenstein quotients of three-dimensional abelian varieties with isolated singularities, up to biholomorphism and homeomorphism. This refines a result of Oguiso and Sakurai about fibred Calabi-Yau threefolds of type $\mathrm{III}_0$. 
Our proof relies on methods of crystallographic group theory applied to the orbifold fundamental groups of such quotients.
\end{abstract}

\maketitle

\tableofcontents

\section{Introduction}

A \textit{Calabi-Yau threefold} $X$ is a 
  complex 
three-dimensional $\mathbb Q$-factorial variety with at most terminal singularities,  trivial canonical divisor
and vanishing irregularity $q_1:=h^1(\mathcal O_X)$. 
 A \textit{fibred} Calabi-Yau threefold is a pair $(X,f)$, where $X$ is a Calabi-Yau threefold and $f\colon X \to Z$ is a contraction, i.e., a surjective holomorphic map with connected fibers onto a normal, projective variety $Z$ of positive dimension. Such a contraction $f$ is defined by the complete linear system of a nef divisor $D$, which has,  according to Bogomolov-Miyaoka-Yau,  
 non-negative intersection 
 $c_2(X)\cdot D\geq 0$.  In \cite{OguisoFiberSpace} and \cite{OguisoQuasiProduct}, Oguiso studied fibred Calabi-Yau threefolds by  subdividing  them 
 into six classes,  
 according to the three possible values of the Iitaka dimension of $D$  and the property, whether the intersection number  $c_2(X)\cdot D$ is zero or strictly positive.
In our article, we are interested in the class, where $D$ is big, i.e., the Iitaka dimension of $D$ is three, and $c_2(X)\cdot D=0$. 
In the literature, they are usually labeled  as 
type $\mathrm{III}_0$. Oguiso characterized them as the examples where the  base  $Z$ of the contraction $f\colon X \to Z$ is three-dimensional, has trivial canonical divisor $K_Z$ and vanishing second Chern class. By a 
famous result of Shepherd-Barron and Wilson \cite{ShepherdWilson}, this amounts to saying  that   $Z$ is biholomorphic to a quotient of an abelian threefold $A$ by a finite translation-free group $G$ which acts freely in codimension two, has at least one fixed point and preserves the volume form of $A$.
Based on this result, Oguiso and Sakurai proved that each fibered Calabi-Yau threefold $(X,f)$ of type $\mathrm{III}_0$ is isomorphic (as a pair) to the necessarily unique crepant resolution 
 $f'\colon X'\to Z$ of the quotient $Z=A/G$ (cf. \cite{OguisoSimplyConnected}, \cite{OguisoQuotientType}). 
 In particular, the pair $(X,f)$  is,  up to isomorphism,  determined by $Z=A/G$.
A fine classification is only established if the quotient  is simply connected  \cite{RoanYau},  \cite{RoanKummer}, \cite{OguisoSimplyConnected}:  there exists exactly two 
biholomorphism classes of quotients  $A/G$ which are represented by  
\begin{itemize}
 \item[(a)] $A:=E^3$, the product of three Fermat elliptic curves $E:=\CC/\ZZ[\ze_3]$, and $G:=\langle \zeta_3\cdot \id\rangle$,  
 \item[(b)]  $A:=\Jac(Q)$, the Jacobian variety of Klein's plane quartic curve 
 \[
 Q:=\lbrace x_0 x_1^3+x_1 x_2^3+x_2 x_0^3 = 0  \rbrace \subset \mathbb P^2_{\mathbb C}, 
 \]
 and $G:=\langle \diag(\ze_7,\ze_7^2,\ze_7^4)\rangle$.   
\end{itemize}
We point out that the abelian threefold $\Jac(Q)$ in (b) has many other descriptions. Indeed, by a result of Shimura-Taniyama 
 \cite{Shimura},  any abelian threefold  with automorphism $\diag(\ze_7,\ze_7^2,\ze_7^4)$ is biholomorphic to $\Jac(Q)$. 
  
If $A/G$ has non-trivial fundamental group, two additional groups occur, namely $\ZZ_3^2$ and the Heisenberg group $\He(3)$ of order $27$. Here, $A$ is always isomorphic to the product of three Fermat elliptic curves $E^3$. However, in contrast to the simply connected case, for each of these  groups $G=\ZZ_3^2$ and $\He(3)$, there are several different actions leading to different 
biholomorphism and even homeomorphism 
classes of quotients  $A/G$. Our main result is  a precise list of all of them.

\begin{theo}\label{theo:MainTheorem}
	Let $G$ be a finite group admitting a holomorphic and translation-free action on a three-dimensional abelian variety  $A=\mathbb C^3/\Lambda$ such that the fixed locus is non-empty and finite and  the volume form of $A$ is preserved. Then, $G$ is cyclic of order $3$ or $7$, or one of the groups
	\[\He(3)=\langle g,h,k\:\mid\: g^3=h^3=k^3=[g,k]=[h,k]=1,\:[g,h]=k\rangle,\quad \ZZ_3^2=\langle h, k\rangle<\He(3).\]
	There are exactly eight biholomorphism classes of  quotients $A/G$. They are pairwise topologically  distinct. The table below contains precisely one representative $Z_i$ for each class.
	\medskip
	\begin{center}
		{\footnotesize
		\bgroup\def\arraystretch{1.5}\begin{tabular}{|c|c|c|l|c|c|} \hline 
		$i$	&\emph{$G$} & \emph{ $\Lambda$} & \multicolumn{1}{c|}{\emph{action}} & \emph{singularities} & $\pi_1(Z_i)$ \\ \hline \hline
		1	&$\ZZ_7$ & $\Lam(\ze_7,\ze_7^2,\ze_7^4) $ & \begin{tabular}{l}$\Phi(1)(z)=\diag(\ze_7,\ze_7^2,\ze_7^4)\cdot z$\end{tabular}& $7\times \tfrac{1}{7}(1,2,4)$ & $\{1\}$ \\ \hline \hline
		2	&$\ZZ_3$ & $\ZZ[\ze_3]^3$ & \begin{tabular}{l}$\Phi(1)(z)=\diag
		(\ze_3,\ze_3,\ze_3)\cdot z $\end{tabular} & $27\times \tfrac{1}{3}(1,1,1)$ & $\{1\}$\\ \hline \hline
		3	&$\ZZ_3^2$ & $\ZZ[\ze_3]^3$ & \begin{tabular}{l}$\Phi(h)(z)=\diag(1,\ze_3^2,\ze_3)\cdot z+(t,t,t)$\\  $\Phi(k)(z)=\diag(\ze_3,\ze_3,\ze_3)\cdot z$\end{tabular}& $9\times \tfrac{1}{3}(1,1,1)$ & $\ZZ_3$\\ \hline
		4   & $\ZZ_3^2$ & $\ZZ[\ze_3]^3+\ZZ(t,t,0) $ & \begin{tabular}{l}$\Phi(h)(z)=\diag(1,\ze_3^2,\ze_3)\cdot z+\tfrac{1}{3}(1,1,3t)$\\  $\Phi(k)(z)=\diag(\ze_3,\ze_3,\ze_3)\cdot z$\end{tabular} &  $9\times \tfrac{1}{3}(1,1,1)$ & $\ZZ_3$\\ \hline
		5   & $\ZZ_3^2$ & $\ZZ[\ze_3]^3+\ZZ(t,t,t) $ & \begin{tabular}{l}$\Phi(h)(z)=\diag(1,\ze_3^2,\ze_3)\cdot z+\tfrac{1}{3}(1,1,1)$\\  $\Phi(k)(z)=\diag(\ze_3,\ze_3,\ze_3)\cdot z$\end{tabular} &  $9\times \tfrac{1}{3}(1,1,1)$ & $\ZZ_3$ \\ \hline
		6   & $\ZZ_3^2$ & $\ZZ[\ze_3]^3+\ZZ(t,t,t)+\ZZ(t,-t,0) $ & \begin{tabular}{l}$\Phi(h)(z)=\diag(1,\ze_3^2,\ze_3)\cdot z+\tfrac{1}{3}(1,1,2)$\\  $\Phi(k)(z)=\diag(\ze_3,\ze_3,\ze_3)\cdot z$\end{tabular} &  $9\times \tfrac{1}{3}(1,1,1)$ & $\ZZ_3$\\
		 \hline \hline
		7 &$\He(3)$ & $\ZZ[\ze_3]^3+\ZZ(t,t,t)$ & \begin{tabular}{l} $\Phi(g)(z)=\begin{pmatrix}
				0 & 0 & 1\\ 1 & 0 & 0 \\ 0 & 1 & 0
			\end{pmatrix} \cdot z+ (t,0,0) $ \\ $\Phi(h)(z)=\diag(1,\ze_3^2,\ze_3)\cdot z+ \tfrac{2}{3}(1,1,1)$ \end{tabular} & $3\times \tfrac{1}{3}(1,1,1)$ & $\ZZ_3^2$\\ \hline
		8 & $\He(3)$ & $\ZZ[\ze_3]^3+\ZZ(t,t,t)+\ZZ(t,-t,0)$ & \begin{tabular}{l} $\Phi(g)(z)=\begin{pmatrix}
				0 & 0 & 1\\ 1 & 0 & 0 \\ 0 & 1 & 0
			\end{pmatrix}\cdot z + (t,0,0) $ \\ $\Phi(h)(z)=\diag(1,\ze_3^2,\ze_3)\cdot z+ \tfrac{2}{3}(1,1,1)$ \end{tabular} & $3\times \tfrac{1}{3}(1,1,1)$ & $\ZZ_3^2$\\ \hline
		\end{tabular}\egroup}
	\end{center}	
	\medskip
	In the table, $t:=(1+2\ze_3)/3$ and $\Lam(\ze_7,\ze_7^2,\ze_7^4)$ denotes  the lattice with basis $\{(\ze_7^k,\ze_7^{2k},\ze_7^{4k})\mid k=0,\ldots,5\}.$
	All quotients are Gorenstein threefolds with trivial canonical class and admit smooth rigid Calabi-Yau threefolds as crepant resolutions.
The quotients $Z_1$ and $Z_2$ are simply connected and  $Z_3, \ldots, Z_8$  are uniformized by $Z_2$.
\end{theo}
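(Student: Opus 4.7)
The plan is to reduce the classification to a question about six-dimensional crystallographic groups and then to extract from this data both the biholomorphism and the homeomorphism types. Given a quotient $Z = A/G$ as in the statement, its orbifold fundamental group $\Gamma := \pi_1^{\mathrm{orb}}(Z)$ fits into a short exact sequence
\[
1 \longrightarrow \Lambda \longrightarrow \Gamma \longrightarrow G \longrightarrow 1,
\]
where $\Lambda \cong \ZZ^6$ is the lattice of $A$, and acts properly discontinuously and cocompactly on $\CC^3$ by complex affine transformations. Since the singularities of $Z$ are isolated quotient singularities, the local isotropy groups can be reconstructed topologically from their links, so $\Gamma$ is a \emph{topological} invariant of $Z$. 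By the Bieberbach theorems, two such crystallographic groups are abstractly isomorphic if and only if conjugate in $\RR^6 \rtimes \GL(6,\RR)$, and yield biholomorphic quotients if and only if conjugate in $\CC^3 \rtimes \GL(3,\CC)$; thus the homeomorphism and biholomorphism classifications of the $Z_i$ both translate into conjugacy questions about crystallographic groups.

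To determine the admissible point groups, I would first observe that at every fixed point the stabilizer acts faithfully and unimodularly on the tangent space with no eigenvalue equal to $1$. A direct application of the Reid--Tai criterion restricts the local singularities to the two Gorenstein terminal types $\tfrac{1}{3}(1,1,1)$ and $\tfrac{1}{7}(1,2,4)$, so every non-trivial element of $G$ has order $3$ or $7$. A Sylow-theoretic analysis, together with the constraint that a faithful three-dimensional unimodular representation of $G$ must have fixed-point-free non-trivial elements, then leaves only $G \in \{\ZZ_3,\, \ZZ_7,\, \ZZ_3^2,\, \He(3)\}$. The cases $G = \ZZ_3$ and $G = \ZZ_7$ reproduce respectively (a) and (b) of the introduction; in the latter the uniqueness of the abelian threefold follows from Shimura--Taniyama.

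For each of the remaining groups $G \in \{\ZZ_3^2,\, \He(3)\}$ I would proceed in three stages: (i) classify the faithful three-dimensional unimodular complex representations $\rho$ satisfying the fixed-locus condition, up to conjugation and relabelling of generators; (ii) classify the $\rho(G)$-invariant lattices $\Lambda \subset \CC^3$, up to homothety and the normalizer of $\rho(G)$ in $\GL(3,\CC)$; (iii) enumerate the translation parts by computing cocycle representatives of $H^2(G, \Lambda)$ modulo the equivalence induced by $\CC^3 \rtimes N_{\GL(3,\CC)}(\rho(G))$. This should yield precisely the six entries $Z_3,\ldots,Z_8$ of the table, each realized by an explicit affine action. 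To establish that these six quotients, together with $Z_1$ and $Z_2$, are pairwise non-homeomorphic, it then suffices to check that the associated crystallographic groups $\Gamma_1,\ldots,\Gamma_8$ are pairwise non-isomorphic as abstract groups. The main obstacle is the separation of $Z_3,\ldots,Z_6$: they share the point group $\ZZ_3^2$, the singularity profile $9 \times \tfrac{1}{3}(1,1,1)$, the topological fundamental group $\ZZ_3$, and even the universal cover $Z_2$, so the distinction must be read off the $\ZZ[\ZZ_3^2]$-module structures of the four lattices, which differ by the enlargements $\ZZ(t,t,0)$, $\ZZ(t,t,t)$, and $\ZZ(t,t,t)+\ZZ(t,-t,0)$ of $\ZZ[\ze_3]^3$. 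A similar but shorter argument distinguishes $Z_7$ from $Z_8$.
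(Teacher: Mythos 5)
Your overall strategy --- encode $Z=A/G$ by the crystallographic group $\Gamma=\pi_1^{orb}$, use Bieberbach to turn both the homeomorphism and the biholomorphism questions into affine conjugacy, classify invariant lattices, and enumerate translation parts cohomologically modulo the normalizer of $\rho(G)$ --- is exactly the paper's strategy (Sections 3 and 4), and your observation that $H^2(G,\Lambda)$ governs the translation parts is equivalent to the paper's use of $H^1(G,\mathbb C^3/\Lambda)$. However, there are concrete gaps in the execution. First, the determination of the possible groups: the singularities $\tfrac13(1,1,1)$ and $\tfrac17(1,2,4)$ are Gorenstein \emph{canonical}, not terminal (an isolated Gorenstein terminal quotient singularity in dimension three is smooth), and Reid--Tai alone does not single them out --- for instance $\tfrac15(1,2,2)$ is an isolated Gorenstein canonical quotient singularity. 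What excludes such cases is that the local linear action extends to an automorphism of a rank-six lattice, so the eigenvalue multiset of $\rho(g)\oplus\overline{\rho(g)}$ must be Galois-stable; combined with a genuine group-theoretic analysis this is the content of Oguiso--Sakurai's theorem, which the paper cites rather than reproves, and which your two-sentence Sylow sketch does not recover.

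Second, your step (ii) has no mechanism for making the list of invariant lattices finite, and your sequential plan (lattices first, then cocycles) misses an essential interplay: the paper first produces an equivariant isogeny $E_1\times E_2\times E_3\to A$ (Proposition 4.3) and then shows that the kernel $K$ lies in $\Fix_{\zeta_3}(E)^3\cong\mathbb Z_3^3$ (Proposition 4.9), but this last step \emph{uses} the requirement that the fixed locus be finite (via Corollary 4.8: $E[3]\not\subseteq p_i(K)$). Without feeding the fixed-point condition into the lattice classification you cannot reduce to the $15$ candidate kernels, let alone to the four $\mathbb Z_3^2$-orbits and the two $\He(3)$-invariant ones. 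Third, the actual count of eight classes and their pairwise distinctness rests on (a) the explicit orbit computations of the normalizers $\mathcal N_{\mathbb C}(\Lambda)$ and $\mathcal N_{\mathbb R}(\Lambda)$ on the good cohomology classes (done by computer in Propositions 4.18 and 4.25), and (b) the proof that $\mathcal N_{\mathbb R}(\Lambda,\Lambda')=\emptyset$ for distinct lattices (Propositions 4.17 and 4.22) --- your remark that the distinction of $Z_3,\dots,Z_6$ ``must be read off the module structures'' correctly locates the difficulty but does not resolve it; the paper's argument compares the indices of $\mathbb Z[\zeta_3]^3$ in $\Lambda$ and $\Lambda'$ and rules out the one remaining equal-index pair by hand. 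As written, your proposal is a faithful roadmap of the paper's method but leaves precisely these load-bearing steps unproved.
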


As we pointed out above, all abelian varieties in the theorem, with the exception of the first, are isomorphic to $E^3$. 
However, if we want to represent  the varieties $Z_4,\ldots, Z_8$ as quotients of $E^3$, different choices of the linear parts of the action would be required due to  necessary  changes of bases.
The quotients $Z_3$ and $Z_5$ have already been constructed and distinguished by Bauer and the first author in \cite{BG21}.  This paper was, 
besides  the mentioned results of Oguiso et al., our primary motivation to derive  the complete fine classification. \\
We will now explain how the paper is organized: in Section~\ref{sec:preliminaries}, we introduce some notations and definitions regarding holomorphic group actions on abelian threefolds. We also 
summarize the known  results of Oguiso et al., in particular the classification of all possible Galois groups. 
 In  Section \ref{sec:Kristallweizen}, we introduce certain  methods and techniques from crystallographic group theory, such as  Bieberbach's theorems.  
 They are in  fact our main tools to establish 
 a fine classification of Calabi-Yau threefolds of type $\mathrm{III}_0$, i.e., to finish the  proof of Theorem 
 \ref{theo:MainTheorem} for the quotients modulo the groups 
$\mathbb Z_3^2$ and  $\He(3)$. The reason why we can apply these  methods relies  on the 
observation that the fundamental group of the smooth locus of quotient of an abelian variety by a finite group $G$ acting freely in codimension one
is  crystallographic. Similarly to the case of \'etale torus quotients, i.e. flat K\"ahler manifolds,  Bieberbach's  results  allow us to decide if two such quotients are biholomorphic or homeomorphic, respectively
(cf.~\cite{HalendaLutowski}).
In Section~\ref{sec:FineClass}, we adapt the \qq{classification machinery} outlined in \cite{DG} to the singular case and finally reach our classification result. A holomorphic action of a group $G$ on an abelian variety  $A=\CC^3/\Lam$ is given by affine transformations where the linear parts, which form the so-called \textit{analytic representation} $\rho$, leave the lattice $\Lambda$ invariant. Hence, $\Lam$ can be viewed as $G$-module via this action. We see in Section~\ref{sec:preliminaries} that, after fixing a basis, the analytic representation $\rho$ is uniquely determined for both groups $\ZZ_3^2$ and $\He(3)$. For specifying the possible abelian varieties, we therefore search for those lattices admitting a $G$-module structure via $\rho$. Then, we determine all actions on these abelian varieties, i.e., the translation parts of the actions. In contrast to the linear part, the translation part is not a homomorphism, but a 1-cocycle. 
Thus, it defines a cohomology class in the group cohomology $H^1(G,A)$ with additional properties reflecting the freeness of the action in codimension two.
 Finally, analyzing the orbits of these cohomology classes under certain group actions enables us to distinguish the biholomorphism and homeomorphism classes of the quotients. The calculations will be performed computer aided using the computer algebra system MAGMA (\cite{MAGMA}). Our code can be found on the website:
\begin{center}
\url{http://www.staff.uni-bayreuth.de/~bt300503/publi.html}.
\end{center}

\bigskip
\textbf{Notation.} 
The cyclic group of order $n$ is denoted by $\ZZ_n$ and
the group of affine linear transformations of $\mathbb K^n$ by $\AGL(n,\mathbb K)$.
 The group of biholomorphic transformations of a complex torus or abelian variety $T$ is $\Bihol(T)$, whereas $\Aut(T)$ is the subgroup of $\CC$-linear biholomorphisms and  
$\Aut_{\sC^\infty}(T)$ the group of $\mathbb R$-linear diffeomorphisms. By the set of \textit{fixed points} or the \textit{fixed locus} of a $G$-action on a torus $T$, we mean the set of all elements in $T$ having non-trivial stabilizer group.


\section{Preliminaries and Background}\label{sec:preliminaries}

In this short section,
we recall the classification of finite groups $G$ which admit a
holomorphic action on a  three-dimensional abelian variety $A$ such that 
\begin{itemize}
\item[(a)]
the fixed locus is non-empty and isolated, 
\item[(b)] 
 the action preserves the volume-form of $A$.
 \end{itemize}
 
 Since holomorphic maps between complex tori, so in particular between abelian varieties, are affine, we can decompose the action of an element $g\in G$ into its \textit{linear part} $\rho(g)$ and its \textit{translation part} $\tau(g)$, i.e., $\Phi(g)(z) = \rho(g)z + \tau(g)$.  By viewing the linear parts of the action as automorphisms of $\CC^{3}$, we obtain a representation
\begin{align*}
	\rho \colon G \to \GL(3,\CC),
\end{align*}
which is called the \textit{analytic representation}. As the quotient of an abelian variety by a finite group of translations is again an abelian variety, we can and will always assume that $G$ acts without translations, equivalently $\rho$ is faithful.\\
Condition~$(\rm{b})$ amounts to saying that the
 analytic representation $\rho$ maps to the special linear group $\SL(3,\CC)$, hence, $X=A/G$ is Gorenstein with trivial  canonical divisor. \\
 
 We distinguish between simply connected quotients $X=A/G$ and such with non-trivial fundamental group. 
 In the first case, not only the groups, but even the isomorphism types of the quotients are completely classified by Roan, Yau and Oguiso, see \cite{RoanYau},  \cite{RoanKummer}, \cite{OguisoSimplyConnected}:

\begin{theo}\label{theo:SimplyConnected}
	Suppose that $X=A/G$  is simply connected, then, $G$ is isomorphic to $\ZZ_3$ or $\mathbb Z_7$. The quotient $X$ is biholomorphic to 
	\[
	Z_1=\Jac(Q)/\langle \diag(\ze_7,\ze_7^2,\ze_7^4)\rangle  \qquad \makebox{or} \qquad  Z_2= E^3/\langle \ze_3\cdot \id\rangle . 
		\]
\end{theo}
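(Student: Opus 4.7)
The plan is to use the three hypotheses -- volume preservation, non-empty and isolated fixed locus on $A$, and simple-connectedness of $A/G$ -- successively to constrain first the analytic representation $\rho$, then the lattice $\Lambda$, and finally the translation cocycle $\tau$.

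\emph{Armstrong reduction.} Writing $\Gamma=\{z\mapsto\rho(g)z+\lambda+\tau(g):g\in G,\,\lambda\in\Lambda\}\subset\AGL(3,\CC)$ for the crystallographic group acting on the simply connected space $\CC^3$, one has $A/G=\CC^3/\Gamma$. An element $(\lambda,g)\in\Gamma$ with $g\neq 1$ fixes some point of $\CC^3$ iff $\lambda+\tau(g)\in(\id-\rho(g))(\CC^3)$. Isolation of the fixed locus on $A$ forces every $g\in G$ with a fixed point on $A$ to have $\rho(g)$ without the eigenvalue $1$, and at least one such $g_0$ exists by non-emptiness. Since for such $g_0$ every affine element $(\lambda,g_0)$ has a fixed point, the product $(\lambda_1,g_0)\cdot(\lambda_2,g_0^{-1})=(\lambda_1+\rho(g_0)\lambda_2,1)$ lies in the normal subgroup $N\trianglelefteq\Gamma$ generated by fixed-point-having elements for all $\lambda_1,\lambda_2\in\Lambda$, so $\Lambda\times\{1\}\subset N$. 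Armstrong's theorem then gives
\[
\pi_1(A/G)\;\cong\;\Gamma/N\;\cong\;G\,\big/\,\langle g\in G : g\text{ has a fixed point on }A\rangle,
\]
and simple-connectedness amounts to $G$ being generated by elements acting with fixed points on $A$.

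\emph{Constraining $(G,\rho)$ and identifying $A$.} Any such generating element $g$ satisfies $\rho(g)\in\SL(3,\CC)$ with no eigenvalue $1$; furthermore, iterating the isolation condition, every non-trivial power of $g$ must still have $\rho$ without the eigenvalue $1$. For $g$ of order $n$ with eigenvalues $(\ze_n^a,\ze_n^b,\ze_n^c)$, this forces $\gcd(a,n)=\gcd(b,n)=\gcd(c,n)=1$ together with $a+b+c\equiv 0\pmod n$. Imposing in addition that $\rho(g)$ preserve a rank-six $\ZZ$-lattice (equivalently, that the $\QQ$-rational representation $\rho\oplus\overline{\rho}$ be defined over $\QQ$) forces $n\in\{3,7\}$, giving only two possibilities up to conjugation: $G=\ZZ_3$ with $\rho(g)=\ze_3\cdot\id$, and $G=\ZZ_7$ with $\rho(g)=\diag(\ze_7,\ze_7^2,\ze_7^4)$. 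In the first case, $\Lambda$ acquires a rank-three $\ZZ[\ze_3]$-module structure and, since $\ZZ[\ze_3]$ is a PID, is isomorphic to $\ZZ[\ze_3]^3$; after a $\CC$-linear change of basis, $A\cong E^3$. In the second case, the Shimura--Taniyama theorem recalled in the introduction identifies $A$ with $\Jac(Q)$.

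\emph{Removing the translation.} The translation $\tau$ is a $1$-cocycle in $Z^1(G,A)$ for the $G$-action via $\rho$. Since $G$ is cyclic and $\rho(g)-\id$ induces an isogeny of $A$ whose norm $N=\sum_{k=0}^{|G|-1}\rho(g)^k$ vanishes on $A$, one has $H^1(G,A)=\ker(N)/\im(\rho(g)-\id)=0$. Therefore $\tau$ is a coboundary and can be removed by translating the origin, putting the action into the normal form listed for $Z_1$ and $Z_2$. The main obstacle is the preceding step: the non-abelian candidates in the Blichfeldt--Miller--Dickson classification of finite subgroups of $\SL(3,\CC)$ must be excluded by verifying, for each, either that the subgroup generated by its regular elements (those without the eigenvalue $1$, closed under powers) is a proper subgroup, or that no rank-six $\ZZ[G]$-stable lattice exists.
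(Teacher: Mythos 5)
The paper does not prove this theorem: it is quoted from Roan--Yau and Oguiso (\cite{RoanYau}, \cite{RoanKummer}, \cite{OguisoSimplyConnected}), so there is no internal proof to compare against. Judged on its own terms, your outline gets the easy half right: the Armstrong reduction is correctly set up (the lattice does lie in the normal subgroup generated by fixed-point elements once one such element with $\rho(g_0)-\id$ invertible exists, so simple connectedness is equivalent to $G$ being generated by elements with fixed points); the eigenvalue analysis (all exponents coprime to $n$, $a+b+c\equiv 0$, Galois-stability of $\{\zeta_n^{\pm a},\zeta_n^{\pm b},\zeta_n^{\pm c}\}$ forcing $n\in\{3,7\}$ with $\rho=\ze_3\cdot\id$ or $\diag(\ze_7,\ze_7^2,\ze_7^4)$) is correct; and the endgame (lattice unique via $\ZZ[\ze_3]$ being a PID, resp.\ Shimura--Taniyama, and $H^1(G,A)=0$ because the norm map vanishes while $\rho(g)-\id$ is an isogeny) is complete for cyclic $G$.

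The genuine gap is the step you yourself flag: why $G$ is cyclic. Your two proposed exclusion criteria --- ``the regular elements generate a proper subgroup'' or ``no invariant rank-six lattice exists'' --- both fail for the critical candidate $\PSL(2,7)$. Being simple, $\PSL(2,7)$ is generated by its order-$7$ elements, which in its three-dimensional representation have exactly the eigenvalues $\ze_7,\ze_7^2,\ze_7^4$; and it does preserve a rank-six lattice, namely the period lattice of $\Jac(Q)$, on which it acts as the full automorphism group of the Klein quartic. The same applies to the Frobenius subgroup $F_{21}=\ZZ_7\rtimes\ZZ_3$. So neither test rules these groups out, and as written your argument would wrongly admit them. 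What is actually needed is an argument that the eigenvalue-one elements cannot all act freely for any choice of cocycle: for instance, an order-$3$ element $h$ normalizing a Sylow $7$-subgroup $\langle g\rangle$ permutes the fixed-point set of $g$, which has $|\det_{\ZZ}(\rho(g)-\id)|=\Phi_7(1)=7$ elements; since $7\equiv 1 \pmod 3$, $h$ must fix one of them, contradicting that $h$ (whose linear part has eigenvalue $1$) must act freely. A remark of this kind, plus the routine elimination of the remaining imprimitive and primitive subgroups of $\SL(3,\CC)$ containing regular elements of order $3$ or $7$, is the actual mathematical content of the cited classification; without it the theorem is not proved.
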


\begin{rem}\label{rem:RY}
The quotient $Z_2$ has $27$ singularities of type $\tfrac{1}{3}(1,1,1)$ given by the images of the $27$ fixed points of the automorphism $\ze_3\cdot\id$ on $E^3$ under the quotient map. Each singularity admits a unique toric crepant resolution with exceptional divisor $\PP^2$. Similarly, $Z_1$ has seven singularities of type $\tfrac{1}{7}(1,2,4)$. They also have a unique crepant resolution which is the composition of three toric blow-ups; the exceptional locus consists of three Hirzebruch surfaces $\mathcal H_2$.
These procedures lead to global crepant resolutions $\psi_i\colon\hat{Z}_i\to Z_i$. The smooth threefolds  $\hat{Z}_i$ are   Calabi-Yau: indeed, the canonical divisor of $\hat{Z}_i$ is trivial since $\psi_i$ is crepant and $K_{Z_i}$ is trivial by construction. Moreover, the irregularity  $q_1(\hat{Z}_i)$ is zero because there are no invariant holomorphic one-forms on $E^3$ and $\Jac(Q)$, respectively. Furthermore, $\hat{Z}_i$ is rigid since $h^{1,2}(\hat{Z}_i)=0$. For details, see \cite{RoanYau}.
\end{rem}

\begin{rem}
	Recently, Gachet \cite{Gachet} generalized Theorem~\ref{theo:SimplyConnected} to higher dimensions:
	let $A$ be an abelian variety of dimension $n\geq 3$ and $G$ a finite group acting freely in codimension two on $A$. If the quotient $A/G$ has a simply connected Calabi-Yau manifold as crepant resolution, then $A$ is isogenous to $E^n$ or $E_{u_7}^n$. Here, $E$ is the Fermat elliptic curve and $E_{u_7}$ the elliptic curve corresponding to the lattice generated by $1$ and $u_7:=\ze_7+\ze_7^2+\ze_7^4$.  
	The group $G$ is generated by its elements having fixed points on $A$.	
Note that $E_{u_7}^3\simeq \Jac(Q)$.
\end{rem}

While Theorem~\ref{theo:SimplyConnected} settles the classification in the simply connected case, only the isomorphism classes of the  groups and the analytic representations are known 
if $X=A/G$ is not simply connected. In fact, Oguiso and Sakurai proved in \cite[cf. Theorem~3.4]{OguisoQuotientType}: 

\begin{theo}\label{theo:NotSimply}
 If $X=A/G$ is not simply connected, then $G$ is isomorphic to $\ZZ_3^2$ or $$\He(3)=\langle g,h,k\:\mid\: g^3=h^3=k^3=[g,k]=[h,k]=1,\:[g,h]=k\rangle.$$ More precisely, the following holds:
	\begin{enumerate}
		\item If $G=\ZZ_3^2=\langle h,k\rangle$, then the analytic representation is equivalent to 
		\[\rho(h)=\diag(1,\ \ze_3^2,\ \ze_3),\quad\rho(k)=\diag(\ze_3,\ \ze_3,\ \ze_3).\]
		\item If $G=\He(3)$, then the analytic representation is equivalent to
		\[
		\rho(g)=\begin{pmatrix} 0&0&1\\1&0&0\\0&1&0 \end{pmatrix},\quad \rho(h)=\begin{pmatrix} 1&&\\&\ze_3^2&\\&&\ze_3 \end{pmatrix},\quad \rho(k)=\begin{pmatrix} \ze_3&&\\&\ze_3& \\ &&\ze_3 \end{pmatrix}.
		\]
	\end{enumerate}
\end{theo}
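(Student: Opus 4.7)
The plan is to extract the isomorphism type of $G$ and its analytic representation $\rho$ from the combined hypotheses (faithfulness, $\SL$-valuedness, isolated non-empty fixed locus) together with the assumption $\pi_1(X)\ne\{1\}$, which via Theorem~\ref{theo:SimplyConnected} excludes $G\cong\ZZ_3,\ZZ_7$.

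I would start with the fundamental eigenvalue constraint: for every $g\in G\setminus\{1\}$ admitting a fixed point on $A$, $\rho(g)$ has no eigenvalue equal to $1$, since otherwise a translate of the $1$-eigenspace of $\rho(g)$ through a fixed point would yield a positive-dimensional component of $\Fix(g)$, contradicting the isolated fixed locus hypothesis. Together with $\rho(g)\in\SL(3,\CC)$, this forces each such $\rho(g)$ to be conjugate to $\diag(\ze_n^{a_1},\ze_n^{a_2},\ze_n^{a_3})$ with $a_i\not\equiv 0\pmod n$ and $a_1+a_2+a_3\equiv 0\pmod n$. Because the fixed locus is non-empty, at least one such element exists. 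Let $H\trianglelefteq G$ be the normal subgroup generated by all elements with fixed points. Then $H$ satisfies all the hypotheses of the theorem and, by an analysis of the quotient $A/H$ through Theorem~\ref{theo:SimplyConnected}, one obtains $H\cong\ZZ_3$ or $\ZZ_7$. In the latter case the automorphisms of $\Jac(Q)$ compatible with the prescribed $\ZZ_7$-action force $G=H\cong\ZZ_7$, making $X$ simply connected and hence contradicting the assumption; so $H\cong\ZZ_3$.

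The remaining task is to analyze the extension $1\to H\to G\to G/H\to 1$, where $G/H$ acts on $A/H$ and permutes its $27$ isolated singularities. The compatibility of this action with the Calabi-Yau structure and the combinatorics of the permutation action on the singular points (which embeds in $\AGL(3,\FF_3)$) together bound $|G/H|$ and restrict its type to $\{1,\ZZ_3,\ZZ_3^2\}$. This yields $G\in\{\ZZ_3,\ZZ_3^2,\He(3)\}$, where $\He(3)$ is the unique non-abelian $3$-group of order $27$ and exponent $3$. The analytic representation is then determined up to equivalence: $\rho(k)=\ze_3\cdot\id$ is forced as the only element of $\SL(3,\CC)$ of order $3$ whose three eigenvalues are equal and non-trivial. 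In the abelian case the complementary generator must be diagonal with eigenvalues $(1,\ze_3^2,\ze_3)$, with reorderings producing equivalent representations. In the Heisenberg case, faithfulness prevents $\rho$ from splitting as a sum of characters (which would factor through $G^{\mathrm{ab}}\cong\ZZ_3^2$), hence $\rho$ is an irreducible $3$-dimensional representation of $\He(3)$; the two such irreducible representations are Galois-conjugate, and after normalising $\rho(k)=\ze_3\cdot\id$ and picking a basis in which $\rho(h)$ is diagonal one recovers the explicit matrices stated.

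The main obstacle, I expect, is the middle step of bounding $|G/H|$ and identifying its isomorphism type through the induced action on the $27$ singular points of $A/H$. The key input here is that $G/H$ must lift to an action on the crepant resolution $\hat{Z}_2$ (cf.\ Remark~\ref{rem:RY}) compatible with the Calabi-Yau structure, and the combinatorial rigidity of the induced permutation on the exceptional $\PP^2$-divisors, rather than the eigenvalue analysis, is what ultimately pins down $G/H\in\{1,\ZZ_3,\ZZ_3^2\}$ and forces $\He(3)$ to occur precisely as the unique non-split central extension.
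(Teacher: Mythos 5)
You should first be aware that the paper does not prove Theorem~\ref{theo:NotSimply} at all: it is quoted verbatim from Oguiso--Sakurai \cite{OguisoQuotientType} (their Theorem~3.4), so there is no in-paper argument to compare against. Judged on its own, your skeleton is the right one -- the eigenvalue-$1$ dichotomy for fixed-point-free versus fixed-point elements, the normal subgroup $H$ generated by elements with fixed points, reduction of $H$ to Theorem~\ref{theo:SimplyConnected}, and the final representation-theoretic identification of $\rho$ (no $2$-dimensional irreducibles for a $3$-group, so $\rho$ is either a sum of three characters, forcing $G\cong\ZZ_3^2$, or irreducible, forcing $G\cong\He(3)$ with $\rho(k)$ scalar) are all sound. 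But two steps are genuinely unsupported. First, invoking Theorem~\ref{theo:SimplyConnected} for $A/H$ presupposes that $A/H$ is simply connected; this is true, but it needs an argument -- since every element of $H$ with a fixed point has $\rho$ without eigenvalue $1$, \emph{every} affine lift of it to $\CC^3$ has a fixed point, differences of lifts generate $\Lambda$, and hence the orbifold fundamental group of $(A,H)$ is generated by elements with fixed points, killing $\pi_1(A/H)$.

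The more serious gap is the middle step. You propose to bound $G/H$ by "the combinatorics of the permutation action on the $27$ singular points" and the "combinatorial rigidity" of the induced action on the exceptional $\PP^2$'s of the crepant resolution, explicitly setting aside the eigenvalue analysis. As stated this cannot work: $\AGL(3,\FF_3)$ contains $3$-subgroups of order far exceeding $27$, so embedding the permutation action there bounds nothing. What does work is (i) the observation that $G/H$ acts \emph{freely} on $A/H$ (any element of $G\setminus H$ acts freely on $A$ by the very definition of $H$), hence freely on the set of $7$ resp.\ $27$ singular points, so $|G/H|$ divides $7$ resp.\ $27$ -- this already disposes of the $\ZZ_7$ case, since a group of order $49$ cannot satisfy the constraints; and (ii) precisely the eigenvalue analysis you discard: every $g\in G\setminus H$ has $\rho(g)$ with eigenvalue $1$, whence (multiplying by the scalar $\rho(k)=\ze_3\id$ to rule out orders $2,4,6$) every such $g$ has order $3$, and a counting argument on the kernels of the characters of $\rho$ shows that the elements admitting eigenvalue $1$ cannot cover $G\setminus H$ when $G\cong\ZZ_3^3$, $\ZZ_9$, or a group of order $81$. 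Without (i) and (ii) your step "this yields $G\in\{\ZZ_3,\ZZ_3^2,\He(3)\}$" is an assertion, not a deduction; with them, the rest of your outline goes through.
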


\begin{rem}\label{rem:resolutions}
	Since we assume that the action is free in codimension two, the unique non-trivial elements having fixed points are $k$ and $k^2$.
	Indeed, the linear parts of all other non-trivial elements have eigenvalue one, hence, the fixed locus of such an element would have positive dimension if it was non-empty. In particular, the map $Z_2\simeq A/\langle k\rangle\to A/G$ is an unramified cover, i.e., $X=A/G$ is uniformized by $Z_2$ and has fundamental group $\pi_1(X)\simeq G/\langle k\rangle$.\\
	Similar to Remark~\ref{rem:RY}, all quotients $X=A/G$ admit unique crepant resolutions 
	$\psi \colon \hat{X} \to X$ leading to Calabi-Yau threefolds $\hat{X}$. These resolutions fit in commutative diagrams:
	\[
		\begin{tikzcd}
		 	\hat{Z}_2 \arrow{d}\arrow{r} & \hat{X} \arrow{d}\\
		 	Z_2 \arrow{r}& X.
		\end{tikzcd}
	\]
	As $\hat{Z}_2\to\hat{X}$ is also an unramified cover, the manifold $\hat{X}$ is rigid as well.
\end{rem}	

We point out that there are no other varieties uniformized by $Z_2$:

\begin{prop}\label{prop:QuotientsBeauville}
	Let $X$ be a complex variety uniformized by $Z_2=E^3/\langle \ze_3\cdot\id\rangle$. Then, $X$ is Gorenstein with trivial canonical class and biholomorphic to a quotient of a complex three-dimensional abelian variety by a faithful action of a group $G$ which is $\ZZ_3$, $\ZZ_3^2$ or $\He(3)$.
\end{prop}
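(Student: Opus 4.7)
The plan is to pass to the orbifold universal cover of $Z_2$, apply Bieberbach's theorems to produce the desired abelian variety, and then invoke Theorem~\ref{theo:NotSimply}.

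Since $Z_2$ is compact, the étale cover $\pi\colon Z_2\to X$ is finite with deck group $H$ acting freely on $Z_2$ and $X=Z_2/H$. The orbifold universal cover of $Z_2$ is $\CC^3$, with orbifold fundamental group the cocompact affine crystallographic subgroup $G_2:=\ZZ[\ze_3]^3\rtimes\langle\ze_3\cdot\id\rangle \subset \Aff(\CC^3)$. Because $\pi$ is étale, the orbifold universal cover of $X$ is again $\CC^3$, and its orbifold fundamental group $G$ contains $G_2$ as a normal subgroup of index $|H|$; in particular $G$ is itself cocompact crystallographic. Bieberbach's first theorem then yields a unique maximal abelian normal subgroup $\Lambda_G$ of $G$ of finite index, consisting of translations and forming a rank-$6$ lattice with $\Lambda_G\supseteq\ZZ[\ze_3]^3$. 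Setting $A:=\CC^3/\Lambda_G$ and $\tilde G:=G/\Lambda_G$, I obtain a presentation $X=A/\tilde G$ of $X$ as the quotient of a complex three-dimensional torus by a finite faithful translation-free action.

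I would next verify the hypotheses of Theorem~\ref{theo:NotSimply} for $(A,\tilde G)$. The image $\bar\sigma\in\tilde G$ of $\ze_3\cdot\id$ is a central element of order three whose linear part $\ze_3\cdot I$ admits $27$ isolated fixed points on $A$. Any $\bar g\in\tilde G\setminus\langle\bar\sigma\rangle$ descends to a non-trivial element of $H$, which by assumption acts freely on $Z_2$, so $\bar g$ has no fixed point on $A$. Hence the fixed locus is non-empty and finite. For the volume-preserving property, fixed-point-having elements automatically have linear part $\ze_3^k\cdot I\in\SL(3,\CC)$; ruling out a non-trivial determinant character on $\tilde G$ requires showing that such a character, composed with a suitable power of $\bar\sigma$, would produce an element of $\tilde G$ whose linear part has $1$ as eigenvalue together with a translation forcing a positive-dimensional fixed locus on $A$, contradicting the isolated singularity structure of $X$ inherited through the étale cover by $Z_2$.

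Theorem~\ref{theo:NotSimply} then forces $\tilde G\in\{\ZZ_3,\ZZ_7,\ZZ_3^2,\He(3)\}$; the group $\ZZ_7$ is excluded because it has no element of order $3$, whereas $\bar\sigma$ is such an element. The torus $A$ is actually an abelian variety, since $\Lambda_G$ inherits from the action of $\ze_3\cdot\id$ the structure of a free $\ZZ[\ze_3]$-module of rank $3$, yielding a natural polarisation and making $A$ isogenous to $E^3$. Finally, $X=A/\tilde G$ is Gorenstein with trivial canonical class because $\tilde G\subset\SL(3,\CC)$. The main obstacle is the verification of the volume-preserving hypothesis: the linear parts of fixed-point-free elements of $\tilde G$ lie in $\GL(3,\ZZ[\ze_3])$ with determinant a priori in $\mu_6$, and excluding non-trivial determinant characters is the technical heart of the argument.
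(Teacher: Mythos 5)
Your overall route---lift to the universal cover of the smooth locus, invoke Bieberbach to extract a maximal lattice, and feed the resulting pair $(A,\tilde G)$ into Theorem~\ref{theo:NotSimply}---is essentially the paper's route: the paper lifts the deck transformations of $q\colon Z_2\to X$ to $E^3$ via Proposition~\ref{prop:ConsBieb} and then quotients $\Deck(q\circ pr)$ by its translation subgroup, which yields the same abelian variety and the same finite translation-free group. The genuine gap is exactly where you flag it: the verification of hypothesis~(b) of Section~\ref{sec:preliminaries}, i.e., that the analytic representation of $\tilde G$ lands in $\SL(3,\CC)$. Your sketch proposes that a non-trivial determinant character would, after multiplying by a power of $\bar\sigma$, produce an element whose linear part has eigenvalue $1$ and which then has a positive-dimensional fixed locus. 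Neither step holds: multiplying a finite-order matrix by $\ze_3^k$ need not create the eigenvalue $1$, and even when the linear part does have eigenvalue $1$ the element may perfectly well act freely---this is precisely what happens for $h$, $g$, $gh$, etc.\ in the actual classification, where the translation part is not a coboundary on the relevant eigenspace. (Nor does lattice-preservation help: $\diag(1,\ze_3,\ze_3)$ preserves $\ZZ[\ze_3]^3$ and has determinant $\ze_3^2$.) So no contradiction is reached; without $\tilde G\subset\SL(3,\CC)$ you can neither apply Theorem~\ref{theo:NotSimply} nor conclude triviality of $K_X$, since your final step derives the latter from the very inclusion that is missing.

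The paper closes this gap with a short cohomological argument that you should adopt. Since $q$ is unramified and $\chi(\sO_{Z_2})=0$, multiplicativity gives $\chi(\sO_X)=0$; since $h^i(\sO_X)\le h^i(\sO_{Z_2})=0$ for $i=1,2$ (trace splitting of $q^*$), this forces $p_g(X)=1$. A non-zero holomorphic $3$-form on $X$ pulls back to a $\Deck(q\circ pr)$-invariant multiple of $dz_1\wedge dz_2\wedge dz_3$ on $E^3$, and invariance of this form is exactly the statement that every linear part has determinant $1$. With that in hand, the remainder of your argument (centrality of $\bar\sigma$, finiteness and non-emptiness of the fixed locus, exclusion of $\ZZ_7$) goes through and recovers the paper's conclusion.
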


\begin{proof}
	Let $q\colon Z_2\to X$ be the universal cover and $pr\colon E^3\to Z_2=E^3/\ZZ_3$  the quotient map.  Then, the map $q\circ pr\colon E^3\to X$ is Galois  because 	
 every $f\in \Deck(q)$ can be lifted to $E^3$ as we will see later on in Proposition~\ref{prop:ConsBieb}.  In other words,  $X$ is a quotient of $E^3$.\\
 	Since $q$ is unramified and $\chi(\sO_{Z_2})=0$, we get $\chi(\sO_X)=0$. As $h^i(\sO_X)\leq h^i(\sO_{Z_2})=0$ for $i=1, 2$, we conclude $0=\chi(\sO_X)=1-p_g(X)$. Therefore, $X$ has trivial canonical class because the volume form of $E^3$ must be invariant under the action of $\Deck( q\circ pr)$.\\
Let $H\trianglelefteq \Deck( q\circ pr)$ be the subgroup of translations, $G:=\Deck( q\circ pr)/H$  and $A=E^3/H$. Then, $G$ acts translation-free on $A$ and $X\simeq A/G$. It follows from Theorem~\ref{theo:SimplyConnected} and Theorem \ref{theo:NotSimply} that the group $G$ is isomorphic to  $\ZZ_3$, $\ZZ_3^2$ or $\He(3)$.
\end{proof}


\section{Crystallographic Groups and Torus Quotients}\label{sec:Kristallweizen}

In this section, we collect the tools form crystallographic group theory that we need to derive the fine classification. Since they also work for non-algebraic complex tori and in arbitrary dimension, we discuss them in the general setting of a complex torus $T=\CC^n/\Lam$ of dimension $n\geq 2$.
In particular, we explain how to use Bieberbach's structure theorems to decide if two torus quotients, which we view in the non-algebraic case as normal complex spaces in the sense of Cartan (cf. \cite{Cartan}), are biholomorphic or homeomorphic.
Our references for  the theory of crystallographic groups are the textbooks \cite{charlap} and \cite{Szcz}.

\begin{definition}\label{sec:bieberbach}
	Let $T=\CC^n/\Lambda$ be a complex torus and $G$ a finite group of biholomorphisms acting on $T$ without translations. Let $\pi \colon \CC^n \to T$ be the universal cover, then, we define 
	the \emph{orbifold fundamental group} as
	\[
	\pi^{orb}_1(T,G):=\lbrace \gamma \colon \CC^n \to \CC^n \mid \exists\, g \in G ~s.t. ~ \pi \circ \gamma =g\circ \pi \rbrace. 
	\]
\end{definition}

\begin{rem}\
\begin{enumerate}
	 \item If $G$ acts freely in codimension at least one, then $\Sing(X)=p(F)$, where $p\colon T \to T/G=X$ is the quotient map and 
	 $ F:=\lbrace x\in T \mid \exists\, g \in G\setminus \{1\} ~ s.t. ~ g(x)=x \rbrace$ is the set of points with non-trivial stabilizer group. 
	 By assumption, the analytic set $F$ has codimension at least two.
	 Since the restriction  $p\colon T\setminus F \to X\setminus \Sing(X)$ is finite and unramified,  the composition 
	 \[
	 \CC^n\setminus \pi^{-1}(F) \to T\setminus F \to X\setminus \Sing(X)
	 \]	
	 is  an unramified cover.  It is universal because $\CC^n\setminus \pi^{-1}(F)$ is simply connected (cf. \cite[p.~378]{Prill}). The Galois group of this cover, i.e., the fundamental group of  $X^\circ=X\setminus \Sing(X)$, equals the orbifold fundamental group $\Gamma:=\pi^{orb}_1(T,G)$.	 
	\item By construction, $\Gamma$ is a group of affine transformations. As $G$ acts translation-free on $T$, the lattice $\Lam$ of the torus  coincides with the subgroup of translations of $\Gamma$ and $G\simeq \Gamma/\Lam$. Moreover, $\Gamma$ is discrete and cocompact, and $X=T/G \simeq \CC^n/\Gamma$ as complex varieties or spaces.
	\item Since $G$ is finite, we may assume that 
	the analytic representation $\rho$ is unitary. 
	Via the identification
	\[
	\CC^n\to\RR^{2n},\quad (z_1,\ldots,z_n)\mapsto (x_1,y_1,\ldots,x_n,y_n),\qquad\mathrm{where}\quad z_j=x_j+\sqrt{-1}y_j,
	\]
	we can  view $\rho$ as a real representation $\rho_{\RR} \colon G \to O(2n)$ and consider $\Gamma=\pi^{orb}_1(T,G)$ as a subgroup of the Euclidean group $\mathbb E(2n) =\RR^{2n} \rtimes O(2n)$.
\end{enumerate}
 \end{rem}

\begin{definition}
	A discrete cocompact subgroup of the Euclidean group $\mathbb E(n)$ is called a \emph{crystallographic group}.
\end{definition}

The above remark tells us that  the orbifold fundamental group $\pi^{orb}_1(T,G)$ is crystallographic if the action of $G$ is free in codimension one. \\
Next, we recall the structure theorems of Bieberbach, which will be crucial in the sequel.

\begin{theo}[\cite{Bieber1, Bieber2}]\label{theo:Bieb}
	The  translation subgroup $\Lambda:=\Gamma \cap \mathbb R^n$ of a crystallographic group  $\Gamma \leq \mathbb{E}(n)$ is a 
	lattice  of rank $n$ and the quotient $\Gamma/\Lambda$ is finite.  All normal Abelian subgroups of $\Gamma$ are contained in $\Lambda$. Furthermore, an isomorphism between two crystallographic groups is given by conjugation with an affine transformation.
\end{theo}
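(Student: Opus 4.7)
The plan is to prove the three assertions in order, following Bieberbach's classical strategy. First, I would show that the point group $\overline\Gamma \subset O(n)$, defined as the image of $\Gamma$ under the projection $\mathbb E(n) \to O(n)$, is finite. Since $\Lambda = \ker(\Gamma \to \overline\Gamma)$, this would give $[\Gamma : \Lambda] = |\overline\Gamma| < \infty$, and cocompactness of $\Gamma$ would then force $\Lambda$ to be cocompact in $\RR^n$, hence a lattice of rank $n$. The key tool is Bieberbach's commutator estimate: there exists $\varepsilon > 0$ such that $\|[A,B] - I\| < \max(\|A - I\|, \|B - I\|)$ whenever $A, B \in O(n)$ lie within $\varepsilon$ of $I$ in the operator norm. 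Combined with the explicit formula for commutators in $\mathbb E(n)$, a suitable iteration shows that any accumulation of elements of $\overline\Gamma$ near $I$ would force $\Gamma$ to contain non-trivial elements arbitrarily close to the identity in $\mathbb E(n)$, contradicting discreteness. Since $O(n)$ is compact, $\overline\Gamma$ would then be finite.

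Second, to show that every normal abelian subgroup $N \trianglelefteq \Gamma$ is contained in $\Lambda$, I would take an arbitrary $(v, A) \in N$ together with any $(w, I) \in \Lambda$. Normality yields
\[
    (w, I)(v, A)(w, I)^{-1} = \bigl(v + (I - A)w,\, A\bigr) \in N,
\]
and abelianness of $N$ forces this element to commute with $(v, A)$. A direct expansion of the two products leads to the identity $(I - A)^2 w = 0$ for every $w \in \Lambda$; since $\Lambda$ spans $\RR^n$ by the first part, $(I - A)^2 = 0$ as a linear map, and as $A \in O(n)$ is semisimple this forces $A = I$, so $(v, A) \in \Lambda$.

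Third, given an abstract isomorphism $\phi \colon \Gamma_1 \xrightarrow{\sim} \Gamma_2$, the previous step guarantees that $\Lambda_i$ is the unique maximal normal abelian subgroup of $\Gamma_i$, so $\phi(\Lambda_1) = \Lambda_2$. The restriction $\phi|_{\Lambda_1}$ then extends uniquely to an $\RR$-linear isomorphism $L \colon \RR^n \to \RR^n$, and a short verification shows the induced map on point groups is $\overline\phi(A) = L A L^{-1}$. To promote $L$ to an affine conjugation $\alpha(x) = Lx + c$ realising $\phi$, I would define $\beta(v, A) := \tau(v, A) - Lv$, where $\tau(v, A)$ denotes the translation part of $\phi(v, A)$, and verify that $\beta$ is a $1$-cocycle for $\Gamma_1$ acting on $\RR^n$ via $\overline\phi$ that vanishes on $\Lambda_1$. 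It therefore descends to a class in $H^1(\overline\Gamma_1, \RR^n)$, which vanishes because $\overline\Gamma_1$ is finite and $\RR^n$ is a $\QQ$-vector space. Writing the resulting coboundary as $\beta(\cdot) = (I - \overline\phi(\cdot))c$ would produce the required vector $c$, and $\alpha = (c, L)$ would realise $\phi$ by conjugation.

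The main obstacle is the commutator estimate underlying the first step: carefully balancing the decay of the rotational part against the boundedness of the translational part, to rule out non-trivial elements of $\Gamma$ arbitrarily close to the identity, is the technical heart of Bieberbach's original argument and the only step that genuinely uses both the discreteness and the cocompactness of $\Gamma$.
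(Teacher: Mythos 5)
The paper offers no proof of this statement: it is quoted as a classical result with a citation to Bieberbach's original papers (and the textbook references \cite{charlap}, \cite{Szcz}), so there is nothing internal to compare your argument against. What you have written is an essentially correct outline of the standard proof of all three Bieberbach theorems. Your second step is complete and correct: conjugating $(v,A)\in N$ by a translation $(w,I)\in\Lambda$ and imposing commutativity does yield $(I-A)^2w=0$ for all $w\in\Lambda$, and semisimplicity of $A\in O(n)$ then gives $A=I$. Your third step is also sound: normality and maximality of $\Lambda_i$ give $\phi(\Lambda_1)=\Lambda_2$, the linear extension $L$ satisfies $\sigma(A)=LAL^{-1}$ by evaluating $\phi$ on conjugates of lattice vectors, and the map $\beta(v,A)=\tau(v,A)-Lv$ is a cocycle vanishing on $\Lambda_1$, hence a coboundary by vanishing of $H^1(\overline\Gamma_1,\RR^n)$ for the finite point group; unwinding $\beta(\cdot)=(I-\overline\phi(\cdot))c$ gives exactly the affine conjugation by $(c,L)$. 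The only incomplete portion is the first step, where the commutator estimate $\lVert [A,B]-I\rVert\le 2\lVert A-I\rVert\,\lVert B-I\rVert$ is correctly identified as the engine, but the actual iteration --- choosing coset representatives with translation part bounded by cocompactness, and showing that an element whose rotational part is sufficiently close to $I$ must be a pure translation --- is asserted rather than carried out. You flag this honestly as the technical heart; as a referee I would accept the sketch for a result of this vintage, but be aware that this is precisely the step one cannot shortcut if the proof is to be self-contained.
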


The next lemma  gives a link between homeomorphisms of torus quotients and their orbifold fundamental groups. This will allow us to apply Bieberbach's structure theorems to distinguish the quotients $T/G$ as we will explain in the proposition below. 

\begin{lemma}\label{le:homeoRestricts}
	Every homeomorphism between two torus quotients by actions that are free in codimension  one restricts to a homeomorphism of their smooth loci. In particular, it induces an isomorphism of the orbifold fundamental groups.
\end{lemma}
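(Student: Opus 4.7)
The plan is to show that the singular locus $\Sing(X_i)$ of each torus quotient $X_i=T_i/G_i$ is intrinsically topological, so that any homeomorphism $f\colon X_1\to X_2$ automatically satisfies $f(\Sing(X_1))=\Sing(X_2)$ and therefore restricts to a homeomorphism of the smooth loci $X_1^\circ\to X_2^\circ$. Once this restriction is secured, functoriality of the fundamental group gives $\pi_1(X_1^\circ)\cong \pi_1(X_2^\circ)$, and by the discussion following Definition~\ref{sec:bieberbach} both sides coincide with the orbifold fundamental groups $\pi^{orb}_1(T_i,G_i)$.

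To identify $\Sing(X_i)$ topologically, I would show that it is exactly the locus of non-manifold points of $X_i$. For the inclusion $\Sing(X_i)\subseteq\{\text{non-manifold points}\}$, take $p\in\Sing(X_i)$, lift it to a preimage $x\in T_i$, and linearize the stabilizer action to obtain a local biholomorphism between a neighborhood of $p$ and $\CC^n/G_{i,x}$ with $G_{i,x}\neq 1$ acting linearly. The hypothesis that $G_i$ acts freely in codimension one on $T_i$ implies that $G_{i,x}$ contains no pseudoreflections. For such a finite linear group, $\CC^n/G_{i,x}$ is not a topological manifold at the origin, so $p$ is not a manifold point of $X_i$. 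The reverse inclusion is immediate, since complex smooth points are always topological manifold points.

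The main obstacle is this non-manifold statement for quotient singularities without pseudoreflections. For the quotients appearing in Theorem~\ref{theo:MainTheorem} the argument is transparent: every singular point is an isolated cyclic quotient singularity of type $\tfrac{1}{m}(a_1,a_2,a_3)$ with $\gcd(m,a_j)=1$, so the stabilizer acts freely on a small sphere around the fixed point and the link $S^{2n-1}/G_{i,x}$ has non-trivial fundamental group $G_{i,x}$. In particular it is not homeomorphic to $S^{2n-1}$, which rules out a topological manifold structure at $p$. A fully general argument for arbitrary non-pseudoreflection subgroups requires a more refined analysis of the local complement of the singular locus, but this is well-documented in the literature on complex quotient singularities. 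After this step, choosing compatible basepoints makes the induced map $\pi_1(X_1^\circ)\to\pi_1(X_2^\circ)$ an isomorphism, and identifying both groups with $\pi^{orb}_1(T_i,G_i)$ completes the proof.
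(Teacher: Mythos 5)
Your overall strategy --- find a local topological invariant that detects the singular points, so that any homeomorphism must preserve $\Sing(X)$ and hence restrict to the smooth loci --- is exactly the idea behind the paper's proof. The paper, however, uses the local fundamental group $\pi_1^{loc}(X,p)$ rather than the property of being a topological manifold point: since the action is free in codimension one, the stabilizers contain no pseudoreflections, so (by Prill's results \cite{Prill}, invoked earlier in Section~\ref{sec:Kristallweizen}) the local fundamental group at $p$ is the stabilizer of a preimage and is therefore trivial if and only if $p$ is smooth. This choice of invariant makes the argument two lines long and works uniformly in the generality in which Lemma~\ref{le:homeoRestricts} is stated and used in Proposition~\ref{prop:ConsBieb}: arbitrary dimension, possibly positive-dimensional singular locus, non-cyclic stabilizers. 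Your argument is only completed for the isolated cyclic quotient singularities of Theorem~\ref{theo:MainTheorem}, and you explicitly defer the general non-pseudoreflection case to ``the literature''; since the lemma must hold for general torus quotients free in codimension one, that deferral is a genuine gap in the write-up, even though the needed facts do exist.

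There is also a logical flaw in the one case you do treat. From ``the link $S^{2n-1}/G_{i,x}$ has non-trivial fundamental group, hence is not homeomorphic to $S^{2n-1}$'' you conclude that $p$ is not a manifold point. But a point of a topological manifold need not have a spherical link (the double suspension theorem produces links that are not spheres, indeed not even manifolds), so ``link not a sphere'' does not by itself exclude a manifold structure. The correct deduction is that a deleted neighborhood of a manifold point of real dimension $2n\geq 3$ is simply connected, whereas your punctured cone neighborhood deformation retracts onto the link and so has fundamental group $G_{i,x}\neq 1$. Once phrased this way, the invariant you are actually using is $\pi_1$ of the deleted neighborhood --- precisely the local fundamental group --- and you have reproduced the paper's proof with an unnecessary and slightly hazardous detour through manifold points.
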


\begin{proof}
	Let $X=T/G$ and $X'=T'/G'$ be torus quotients  and $f\colon X\to X'$ a homeomorphism. Then, $f$ induces an isomorphism of the local fundamental groups $\pi_1^{loc}(X,p)$ and $\pi_1^{loc}(X',f(p))$ for every $p\in X$. Since the actions are free in codimension one, these groups are trivial if and only if the points $p$ and $f(p)$, respectively, are smooth. Therefore, $f$ maps the smooth locus of $X$ homeomorphically to the smooth locus of $X'$.
\end{proof}


\begin{prop}\label{prop:ConsBieb}
	Let $\Phi\colon G\to \Bihol(T)$ and $\Phi'\colon G'\to\Bihol(T')$ be translation-free holomorphic actions of finite groups $G$ and $G'$ which are free in codimension one. Assume that the quotients $X=T/G$ and $X'=T'/G'$ are homeomorphic. Then:
	\begin{enumerate}
		\item The groups $G$ and $G'$ are isomorphic.
		\item There exists an affine transformation $\alpha\in \AGL(2n,\RR)$ inducing homeomorphisms $\widehat{\alpha}$ and $\widetilde{\alpha}$, such that the following diagram commutes:
	 \[
	 \begin{tikzcd}
	 	T \arrow{d}\arrow{r}{\widetilde{\alpha}} & T' \arrow{d}\\
	 	X\arrow{r}{\widehat{\alpha}} & X'.
	 \end{tikzcd}
	\]
 	\end{enumerate}
	Furthermore, any biholomorphism $f\colon X\to X'$ lifts to a biholomorphism of the tori, i.e., it is induced by an affine transformation $\alpha\in\AGL(n,\CC)$.
\end{prop}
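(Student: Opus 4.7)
The plan is to apply Bieberbach's structure theorems (Theorem~\ref{theo:Bieb}) to the orbifold fundamental groups. Set $\Gamma := \pi^{orb}_1(T,G)$ and $\Gamma' := \pi^{orb}_1(T',G')$; both are crystallographic subgroups of $\mathbb E(2n)$. A homeomorphism $X\simeq X'$ restricts to a homeomorphism of the smooth loci by Lemma~\ref{le:homeoRestricts}, and therefore induces an isomorphism $\Gamma \simeq \Gamma'$ on the level of orbifold fundamental groups. Bieberbach's third assertion then produces an affine transformation $\alpha \in \AGL(2n,\RR)$ whose conjugation realizes this isomorphism.

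For (1), I appeal to Bieberbach's first theorem: the translation lattices $\Lam \subset \Gamma$ and $\Lam'\subset \Gamma'$ are the unique maximal normal abelian subgroups of their respective crystallographic groups, so any isomorphism $\Gamma\simeq\Gamma'$ must send $\Lam$ onto $\Lam'$. Passing to quotients gives $G\simeq\Gamma/\Lam\simeq\Gamma'/\Lam'\simeq G'$. For (2), since $\alpha$ conjugates $\Lam$ onto $\Lam'$ and $\Gamma$ onto $\Gamma'$, it descends in two stages to the desired homeomorphisms $\widetilde\alpha\colon T\to T'$ and $\widehat\alpha\colon X\to X'$; commutativity of the square is then built into the construction.

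For the final statement, assume $f\colon X\to X'$ is a biholomorphism. Being in particular a homeomorphism, $f$ restricts by Lemma~\ref{le:homeoRestricts} to a biholomorphism of the smooth loci, and so lifts through their universal covers to a biholomorphism $\widetilde f\colon \CC^n\setminus \pi^{-1}(F)\to\CC^n\setminus\pi^{-1}(F')$. Since $F$ has complex codimension at least two in $T$, the removed analytic set has codimension at least two in $\CC^n$, so the Riemann--Hartogs extension theorem extends $\widetilde f$ and its inverse to a biholomorphism of $\CC^n$. This extension normalizes the deck action, and by the uniqueness of the maximal normal abelian subgroup of a crystallographic group it conjugates $\Lam$ onto $\Lam'$, hence descends to a biholomorphism $T\to T'$. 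Any such biholomorphism is affine (lifting it, its differential is $\Lam$-periodic and hence descends to a holomorphic map from the compact torus into $\Mat(n,\CC)$, which must be constant), so $\widetilde f\in\AGL(n,\CC)$ as required.

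The main technical subtlety is the holomorphic extension across the preimage of the singular locus: it depends on the codimension-two hypothesis in order to apply Hartogs, while the subsequent identification of $\Lam$ inside the normalized deck group rests on Bieberbach's characterization of the translation lattice as the unique maximal normal abelian subgroup.
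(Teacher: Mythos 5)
Your proof is correct and follows essentially the same route as the paper: Bieberbach's theorems combined with Lemma~\ref{le:homeoRestricts} for the homeomorphism statements, and for a biholomorphism the lift to the universal covers of the smooth loci, extension across the codimension-two set by the second Riemann extension theorem, and the identification of $\Lam$ as the unique maximal normal abelian subgroup to descend to the tori. The only difference is that you spell out why a biholomorphism of compact complex tori is affine, a detail the paper leaves implicit.
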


\begin{proof}
	The assertions under the assumption that the quotients are homeomorphic follow directly from Theorem~\ref{theo:Bieb} using Lemma~\ref{le:homeoRestricts}. So, let $f\colon X\to X'$ be a biholomorphic map. Clearly, this map restricts to a biholomorphism  between the smooth loci of the quotients, $f\colon X^\circ\to (X')^\circ$, and lifts to the universal covers:
		\begin{equation*}
		\begin{tikzcd}
			\CC^n\setminus \pi^{-1}(F) \arrow{d}\arrow{r}{\widetilde{f}} & \CC^n\setminus (\pi')^{-1}(F')\arrow{d}\\
			X^\circ \arrow{r}{f} & (X')^\circ.
		\end{tikzcd}	
	\end{equation*}
	Since $\pi^{-1}(F)\subset \CC^n$ is analytic and of codimension at least two, there exists a unique biholomorphic extension $\widetilde{F}\colon \CC^n\to\CC^n$ of $\widetilde{f}$ by Riemann's second extension theorem (cf. \cite[Theorem~6.12]{Grauert}). Because of the commutativity of the diagram, we can find for any $\gamma\in\Gamma$ an element $\gamma'\in\Gamma'$ such that $\widetilde{F}\circ \gamma=\gamma'\circ\widetilde{F}$ for all $z\in \CC^n\setminus\pi^{-1}(F)$, hence, for all $z\in\CC^n$ by the identity theorem.
	Therefore, the biholomorphism $\widetilde{F}$ induces a biholomorphic map $F\colon X\to X'$, that coincides with $f$.  Since conjugation with $\widetilde{F}$ gives an isomorphism between the crystallographic groups $\Gamma$ and $\Gamma'$, the biholomorphism $\widetilde{F}$ maps the lattice $\Lam$ to $\Lam'$ and induces a well-defined map between the tori $T$ and $T'$. In particular, $\alpha:=\widetilde{F}$ is affine linear.
\end{proof}

As in \cite[Remark~4.7]{DG}, we make use of the following observations and notations.

\begin{rem}\label{rem:AffinitiesBieb}
	Let $f\colon X\to X'$ be a homeomorphism induced by an affine transformation $\alpha(x)=Cx+d$. Then, the commutativity of the diagram in Proposition~\ref{prop:ConsBieb} is equivalent to the existence of an isomorphism $\varphi\colon G\to G'$ such that
	\[ (\mathrm{a})\: C\rho_\RR(u)C^{-1}=\rho'_\RR(\varphi(u)) \qquad \mathrm{and}\qquad (\mathrm{b})\: (\rho'_\RR(u)-\id)d=C\tau(\varphi^{-1}(u))-\tau'(u) \]
	hold for all $u\in G$, where the second item is an equation holding on $T'$.\\
	Item $($a$)$ means that $\rho_\RR$ and $\rho'_\RR\circ \varphi$ are equivalent as real representations or as complex representations if $f$ is holomorphic. Together with $C\Lam =\Lam'$, this precisely means that $C$ is contained in
	\[ \sN_\RR(\Lam,\Lam'):=\{C\in\GL(2n,\RR)\mid C \Lam=\Lam',\: C\cdot\im(\rho_\RR)=\im(\rho'_\RR)\cdot C\}\]
	in the homeomorphic case, and in 
	\[\sN_\CC(\Lam,\Lam'):=\sN_\RR(\Lam,\Lam')\cap \GL(n,\CC)\]
	in the holomorphic case. Note that $\varphi=\varphi_C$ is uniquely determined by $C$ due to the faithfulness of the analytic representations.
\end{rem}	

Next, we want to explain the meaning of item $($b$)$ of the previous remark in more detail. For this, we first analyze some properties of the translation part $\tau\colon G\to T$ of an action of $G$ on $T$.

\begin{rem}\
\begin{enumerate}
\item The translation part $\tau\colon G\to T$ of a holomorphic action $\Phi(g)(z)=\rho(g)z+\tau(g)$ on a complex torus $T$ is not a homomorphism, but a 1-cocycle,
\[\tau(gh)=\rho(g)\tau(h)+\tau(g).\]
Thus, it defines a class in the first group cohomology,
\[H^1(G,T)=\frac{\{\tau\mid \tau(gh)=\rho(g)\tau(h)+\tau(g)\}}{\{\tau\mid \exists\, d\in T\colon \tau(g)=\rho(g)d-d\}}, \]
where we view $T$ as a $G$-module via the action of $\rho$. Up to conjugation by a translation, $\Phi$ is uniquely determined by $\rho$ and the cohomology class of $\tau$.
\item For any $C\in \sN_\RR(\Lam,\Lam')$, the expression $C\ast \tau:=C\cdot (\tau\circ\varphi^{-1})$  is a cocycle on $T'$. The equation in Remark~\ref{rem:AffinitiesBieb}(b) means that $C\ast\tau$  belongs to the same cohomology class as $\tau'$.
\item	 In the special case where $\rho=\rho'$  and  $T=T'$, the sets $\mathcal N_{\mathbb R}(\Lambda,\Lambda)$ and 
	$\mathcal N_{\mathbb C}(\Lambda,\Lambda)$ are the normalizers of $\im(\rho_{\mathbb R})$ in  $\Aut_{\sC^\infty}(T)$ and in  $\Aut(T)$. 
	For simplicity, we denote them by $\mathcal N_{\mathbb R}(\Lambda)$ and $\mathcal N_{\mathbb C}(\Lambda)$. 
	Item (b) tells us that they  act on $H^1(G,T)$ by $C \ast \tau$.
	It follows that $X$ and $X'$ are homeomorphic (or biholomorphic) if and only if the cohomology classes of $\tau$ and $\tau'$ belong to the same orbit under this action.
\end{enumerate}
\end{rem}


\section{Fine Classification of the Quotients}\label{sec:FineClass}

This section is devoted to the fine classification of the quotients by the groups $\ZZ_3^2$ and $\He(3)$. The methodology is inspired by \cite{DG}.  
Note that the linear part of an action of a finite group $G$ on an abelian variety $A=\CC^3/\Lam$ leaves the lattice $\Lam$ invariant. Hence, $\Lam$ becomes a $G$-module via the action of the analytic representation. Using this observation and the results from the previous section,  a classification can be achieved performing the following steps:\\

\noindent 
{\bf Scheme for the Classification}

\begin{enumerate}
	\item \label{scheme-first}
	For each group $G=\ZZ_3^2$ and $\He(3)$, determine all lattices $\Lambda$ which have a  $G$-module structure via the 
	representations $\rho$ from  Theorem~\ref{theo:NotSimply}.
	\item 
	For each $A=\mathbb C^3/\Lambda$ and for  all  cohomology classes in  $H^1(G,A)$ leading to an action with non-empty and finite fixed locus, fix a  
	 representative $\tau$.  We will refer to such classes as \textit{good} cohomology classes.
	\item
	Decide, which quotients of the abelian varieties $\CC^3/\Lam$ by the actions $\Phi(u)(z)=\rho(u)z+\tau(u)$ are biholomorphic or homeomorphic, respectively. 
\end{enumerate}

\begin{rem}
Note that the abelian varieties $A$ defined by the lattices $\Lambda$ from the first step are all isomorphic to $E^3$. However, $\Lambda$ and $\ZZ[\ze_3]^3$ are in general not isomorphic as 
$G$-modules with module structure defined by $\rho$. Hence, we can not assume that $A$ is equal to $E^3$.
\end{rem}

We follow our classification strategy and first determine the possible lattices $\Lambda$. For this, we show that each candidate for $\Lambda$ contains $\mathbb Z[\zeta_3]^3$ as a sublattice of finite index. 

\begin{prop} \label{prop:isog}
	Let $A$ be a three-dimensional abelian variety allowing an action of one of the groups $\ZZ_3^2$ or $\He(3)$ with analytic representation $\rho$. Then, the abelian variety $A$ is equivariantly isogenous to a product of three elliptic curves $E_i \subset A$, each of which is a copy of the Fermat elliptic curve $E=\CC/\mathbb Z[\zeta_3]$. 
\end{prop}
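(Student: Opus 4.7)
The plan is to exploit the fact that, in both cases, the central generator $k$ acts by $\rho(k)=\zeta_3\cdot\id$, so $\Lambda$ is automatically a $\ZZ[\ze_3]$-module, and the element $h$ acts by a diagonal matrix whose eigenspaces are the three coordinate axes $V_1,V_2,V_3\subset \CC^3$. The strategy is to cut $\Lambda$ by explicit idempotent-like elements of the group algebra $\ZZ[G]$ associated to these eigenspaces and to recover each elliptic factor $E_i$ as $V_i/(\Lambda\cap V_i)$.

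More concretely, I would set $\Lambda_i:=\Lambda\cap V_i$ and consider the three elements of $\ZZ[G]$
\[
a_1=1+h+h^2,\qquad a_2=1+kh+k^2h^2,\qquad a_3=1+k^2h+kh^2.
\]
A direct computation with the diagonal matrices from Theorem~\ref{theo:NotSimply} shows that $\rho(a_i)$ is the diagonal matrix with a $3$ in the $i$-th slot and zeros elsewhere, so $\rho(a_i)$ carries $\Lambda$ into $\Lambda_i$ and $\rho(a_1)+\rho(a_2)+\rho(a_3)=3\,\id$. From this I obtain the sandwich
\[
3\Lambda \;\subseteq\; \Lambda_1\oplus\Lambda_2\oplus\Lambda_3 \;\subseteq\; \Lambda,
\]
where the middle sum is an internal direct sum because $V_1,V_2,V_3$ are linearly independent. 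Since $\Lambda$ has $\ZZ$-rank $6$, each $\Lambda_i$ must have rank $2$, hence is a full lattice in $V_i\cong\CC$; the quotient $E_i:=V_i/\Lambda_i$ is therefore an elliptic curve which, being stable under multiplication by $\zeta_3=\rho(k)|_{V_i}$, has complex multiplication by $\ZZ[\ze_3]$ and is biholomorphic to $E=\CC/\ZZ[\ze_3]$.

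The inclusion $\Lambda_1\oplus\Lambda_2\oplus\Lambda_3\hookrightarrow\Lambda$ induces a surjective morphism $E_1\times E_2\times E_3\to A$ with finite kernel, i.e.\ an isogeny. To see that it is $G$-equivariant, I equip $E_1\times E_2\times E_3$ with the action inherited from the block structure of $\rho$: in the $\ZZ_3^2$-case each $V_i$ is $G$-invariant, so the action is diagonal; in the $\He(3)$-case the remaining generator $g$ acts by the cyclic permutation matrix, which maps $V_i$ to $V_{g(i)}$ and identifies $\Lambda_i$ with $\Lambda_{g(i)}$, so it descends to a permutation of the factors of the product. By construction, both the summation map on universal covers and the action of $G$ are described by the same linear data, so equivariance is automatic.

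The only step that requires genuine care is verifying that $\rho(a_i)$ really has the claimed form, which boils down to the identity $1+\zeta_3+\zeta_3^2=0$ applied to each diagonal entry; this is routine. Beyond that, the argument is essentially formal, the main conceptual point being the passage from an abstract $\ZZ[G]$-module structure on $\Lambda$ to a splitting into rank-two sublattices. I do not expect a significant obstacle, although one should remember in the $\He(3)$-case that the product $E_1\times E_2\times E_3$ is \emph{not} a product as $G$-variety but carries the semidirect-type action just described.
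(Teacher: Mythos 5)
Your proof is correct and follows essentially the same route as the paper: the subtori you construct, $E_i=V_i/(\Lambda\cap V_i)$, are exactly the paper's $E_1=\ker(\rho(h)-\id_A)^0$, $E_2=\ker(\rho(hk)-\id_A)^0$, $E_3=\ker(\rho(hk^2)-\id_A)^0$, and in both cases the isogeny is the addition map. The only difference is that you make explicit, via the norm elements $1+h+h^2$, $1+kh+k^2h^2$, $1+k^2h+kh^2$ and the sandwich $3\Lambda\subseteq\Lambda_1\oplus\Lambda_2\oplus\Lambda_3\subseteq\Lambda$, the rank and isogeny claims that the paper disposes of with ``by construction.''
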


\begin{proof}
	Consider the following subtori 
	\[E_1 := \ker(\rho(h)-\id_A)^0, \quad E_2 := \ker(\rho(hk)-\id_A)^0, \quad E_3 := \ker(\rho(hk^2)-\id_A)^0, \]
	where the superscript $0$ denotes the connected component of the identity.
	By construction, the curves $E_i$ are isomorphic to $E=\CC/\mathbb Z[\zeta_3]$ and the addition map 
	\[
	\mu \colon E_1 \times E_2 \times E_3 \to A
	\]
	is an equivariant isogeny.
\end{proof}

Due to Proposition~\ref{prop:isog}, we can assume that  $A = E^3/K$, where $E = \mathbb C/\mathbb Z[\zeta_3]$ is the Fermat elliptic curve and $K$ is the 
kernel of the addition map $\mu$. 
Sometimes, if we need to keep track of the order, we may write $E_i$ for the $i$-th factor of $E^3$. 
Note that 
	\[E_i\hookrightarrow E_1\times E_2\times E_3 \rightarrow \bigslant{E^3}{K}=A\]
	is injective for all $i=1,2,3$. Hence, the kernel $K$ does not contain non-zero multiples of unit vectors. 

\begin{rem}\label{rem:actionStandard}
	Let $\Phi \colon G \hookrightarrow \Bihol(A)$ be a faithful holomorphic action. 
	\begin{itemize}
	\item
	If $G = \Heis(3)$, then, up to a change of the origin in $A$, the translation part $\tau \colon G \to A$ of $\Phi$ can be written in the form
	\[
		\tau(h)= \left( a_1, \ a_2, \ a_3\right), \quad
		\tau(k)= \left( 0, \ 0, \ 0\right), \quad 
		\tau(g)= \left( b_1, \ b_2, \ b_3\right).
	\]
	\item 
	If $G=\mathbb Z_3^2=\langle h,k\rangle$, then the translation part of $\Phi$ can be written as 
	\[
		\tau(h)= \left( a_1, \ a_2, \ a_3\right), \quad
		\tau(k)= \left( 0, \ 0, \ 0\right).
	\]
\end{itemize}	
We refer to such a translation part as a
\emph{cocycle in standard form}.  
\end{rem}

\begin{lemma}\label{le:a_i}
	Let $\tau$ be a cocycle in standard form, then, $a_1,a_2$ and $a_3$ belong to $E[3]$, the set of $3$-torsion points of $E$.
\end{lemma}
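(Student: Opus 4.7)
The strategy is to exploit the cocycle identity $\tau(u^m)=(\rho(u)^{m-1}+\cdots+\rho(u)+\id)\tau(u)$ applied to well-chosen elements $u\in G$ of order $3$. For such an element, $\tau(u^3)=0$ in $A$, so the element $(\rho(u)^2+\rho(u)+\id)\tau(u)\in\CC^3$ must lie in the kernel lattice $K$ of the addition map $\mu\colon E^3\to A$. When $\rho(u)$ is diagonal with one eigenvalue equal to $1$ and the others primitive cube roots of unity, the sum $\rho(u)^2+\rho(u)+\id$ collapses (using $1+\ze_3+\ze_3^2=0$) to a diagonal matrix whose only nonzero entry is a $3$ in the slot where the eigenvalue is $1$, thereby isolating $3a_i$ in one coordinate and killing the rest.

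Concretely, I would apply this with the three commuting elements $h$, $hk$, $hk^2$, all of order $3$ and all lying in the copy of $\ZZ_3^2\subseteq G$. Their analytic representations are
\[
\rho(h)=\diag(1,\ze_3^2,\ze_3),\qquad \rho(hk)=\diag(\ze_3,1,\ze_3^2),\qquad \rho(hk^2)=\diag(\ze_3^2,\ze_3,1),
\]
so the eigenvalue $1$ cycles through the three coordinates. Since $\tau(k)=0$ in standard form, the cocycle identity gives $\tau(hk^j)=\tau(h)=(a_1,a_2,a_3)$ for $j=0,1,2$, and the computation above produces three elements of $K$:
\[
(3a_1,0,0),\qquad (0,3a_2,0),\qquad (0,0,3a_3).
\]

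To finish, I would invoke the fact recorded just after Proposition~\ref{prop:isog}: the kernel $K$ of $\mu\colon E^3\to A$ contains no nonzero multiple of a standard unit vector, because each factor $E_i\hookrightarrow A$ is injective. Consequently $3a_i=0$ in $E$ for $i=1,2,3$, which is the desired conclusion $a_i\in E[3]$.

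The argument is essentially a short algebraic manipulation, and I do not anticipate a serious obstacle. The only care point is bookkeeping: one must ensure that $\tau(hk)$ and $\tau(hk^2)$ really equal $\tau(h)$ (which uses $\tau(k)=0$ and the cocycle relation twice, via $\tau(k^2)=\rho(k)\tau(k)+\tau(k)=0$), and that the arithmetic of the three diagonal sums really isolates a single coordinate. No input from the cubic element $g\in\He(3)$ is needed, so the same proof covers both groups uniformly.
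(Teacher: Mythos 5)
Your proposal is correct and follows essentially the same route as the paper: the authors likewise deduce from $\Phi(h)^3=\Phi((hk)^3)=\Phi((hk^2)^3)=\id$ that $(3a_1,0,0)$, $(0,3a_2,0)$ and $(0,0,3a_3)$ vanish in $A$, and conclude $3a_i=0$ in $E$ using that $K$ contains no nonzero multiples of unit vectors. Your write-up merely makes the cocycle computation and the collapse of $\id+\rho(u)+\rho(u)^2$ more explicit.
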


\begin{proof}
	Let $\Phi$ be the action corresponding to $\tau$. Since $\Phi$ is a homomorphism and $h$ is an element of order three, we have $\Phi(h)^3=\id$.  This property is assured if and only if $\tau(h^3)=(3a_1,\ 0,\ 0)$ equals zero in $A$, which is equivalent to require $3a_1=0$ in $E$. Analogously, $\tau((hk)^3)=(0,\ 3a_2,\ 0)$ and $\tau((hk^2)^3)=(0,\ 0,\ 3a_3)$ are zero in $A$, which forces $3a_2=3a_3=0$ in $E$.
\end{proof}

\begin{lemma} \label{le:well-defined}
	Let $\tau$ be a  cocycle in standard form. Then:
	\begin{enumerate}
		\item If $G = \mathbb Z_3^2$, the following element is zero in $A$:
		\begin{itemize}
			\item[]  $v_1 := \left((\ze_3-1)a_1, \ (\ze_3-1)a_2, \ (\ze_3-1)a_3\right)$.
		\end{itemize}
		Conversely, given $a_i\in E[3]$ such that $v_1$  is zero in $A$, we obtain a 
		cocycle $\tau\colon \mathbb Z_3^2\to A$ in standard form.
		\item If $G = \Heis(3)$, the elements $v_1, ..., v_4$ are zero in $A$, where
		\begin{itemize}
			\item[] $v_2 := \left(b_1+b_2+b_3, \ b_1+b_2+b_3, \ b_1+b_2+b_3\right)$, 
			\item[] $v_3 := \left((\ze_3-1)b_1, \ (\ze_3-1)b_2, \ (\ze_3-1)b_3\right)$, 
			\item[] $v_4 := \left(\ze_3a_1-a_3+(\ze_3-1)b_1, \ \ze_3a_2-a_1, \ \ze_3a_3-a_2+(\ze_3^2-1)b_3\right)$
		\end{itemize}
		and $v_1$ is as above.\\
		Conversely, given $a_i\in E[3]$ and $b_j$ such that $v_1, \ldots , v_4$  are zero in $A$, we obtain a 
		cocycle $\tau\colon \He(3)\to A$ in standard form.
	\end{enumerate}
\end{lemma}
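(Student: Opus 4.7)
The plan is to use the cocycle identity $\tau(uv)=\rho(u)\tau(v)+\tau(u)$ as the sole tool. A map defined on the generators of $G$ extends uniquely to a well-defined cocycle $G\to A$ if and only if the identity holds for every defining relation. So the proof reduces to evaluating $\tau$ on both sides of each relation of $G$ and checking that the resulting equality in $A$ is exactly the vanishing of one of the $v_i$'s. Lemma~\ref{le:a_i} already encodes the relation $h^3=1$, and $\tau(k)=0$ trivially encodes $k^3=1$ (as well as every relation of the form $[\,\cdot\,,k]=1$ involving only $\rho(k)=\zeta_3\cdot\id$ multiplying $\tau(k)=0$, although this is not quite what those commutators say --- see below).

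For $G=\ZZ_3^2$, the only remaining defining relation is $hk=kh$. A direct evaluation gives $\tau(hk)=\rho(h)\tau(k)+\tau(h)=(a_1,a_2,a_3)$ and $\tau(kh)=\rho(k)\tau(h)=(\zeta_3 a_1,\zeta_3 a_2,\zeta_3 a_3)$, so the cocycle condition is equivalent to $v_1=0$ in $A$. Conversely, given $a_i\in E[3]$ with $v_1=0$, the formulas of Remark~\ref{rem:actionStandard} extend consistently to a cocycle on all of $\ZZ_3^2$ since they respect every defining relation. For $G=\He(3)$, I go through the four non-trivial defining relations $g^3=1$, $[g,k]=1$, $[h,k]=1$, $[g,h]=k$ in turn. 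Using $\rho(g)(x_1,x_2,x_3)=(x_3,x_1,x_2)$ and $\rho(h)=\diag(1,\zeta_3^2,\zeta_3)$, iterated application of the cocycle identity gives
$\tau(g^3)=(b_1+b_2+b_3,b_1+b_2+b_3,b_1+b_2+b_3)=v_2$,
$\tau(gk)-\tau(kg)=((\zeta_3-1)b_1,(\zeta_3-1)b_2,(\zeta_3-1)b_3)=v_3$,
$\tau(hk)-\tau(kh)=v_1$ as before,
and a slightly longer but mechanical computation of $\tau(gh)-\tau(khg)$ reproduces $v_4$ componentwise.

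The only mild subtlety is the last relation $[g,h]=k$: one has to keep track of the order of composition and of how $\rho(g)$ permutes the components before adding the translations. Otherwise the proof is pure bookkeeping, and the converse direction follows because $\ZZ_3^2$ and $\He(3)$ are finitely presented by exactly these relations, so verifying each of them guarantees that the prescribed values extend to an honest cocycle. No step is genuinely hard --- the only place to be careful is writing out the $v_4$ identity correctly from $\tau(gh)=\tau(k)+\tau(hg)=\tau(hg)$ computed explicitly.
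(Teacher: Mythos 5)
Your proof is correct and is essentially the paper's own argument: the paper likewise reduces the statement to the fact that $\tau$ extends to a cocycle exactly when $\Phi(g),\Phi(h),\Phi(k)$ satisfy the defining relations of $G$, computes $\tau(gk)-\tau(kg)=v_3$ as a sample, and notes that $k^3=1$ is automatic while $h^3=1$ gives nothing beyond $a_1\in E[3]$. One small caution on your closing remark: the identity $\tau(gh)=\tau(k)+\tau(hg)=\tau(hg)$ comes from the decomposition $gh=(hg)k$, and the resulting condition differs from the stated $v_4$ (which arises from $gh=k(hg)$, i.e.\ $\tau(gh)=\rho(k)\tau(hg)+\tau(k)$) by $v_1+\rho(h)v_3$ --- so the two are equivalent only modulo the other relations, and reproducing $v_4$ literally requires keeping the twist $\rho(k)\tau(hg)$ rather than dropping it.
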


\begin{proof}
	We will only sketch the proof of (2). For this, denote by $\Phi$ the corresponding action. Since $\Phi$ is a homomorphism, the images $\Phi(g)$, $\Phi(h)$ and $\Phi(k)$ of the generators fulfill the six defining relations of $\Heis(3)$.  As an example, the relation $\Phi(g) \circ \Phi(k) = \Phi(k) \circ \Phi(g)$ precisely means that the difference $\tau(gk)-\tau(kg)=v_3$ is zero in $A$. Since $\Phi(k)^3=\id_A$ is always true and $\Phi(h)^3=\id_A$ as  $a_1$ belongs to $E[3]$, we only get four conditions.
\end{proof}

\begin{cor}\label{cor:p(E)K}
	Let $G$ be one of the groups $\ZZ_3^2$ or $\He(3)$ and let $\tau$ be a good cocycle in standard form. Denote by $p_i\colon K\to E_i$ the projection on the $i$-th factor. Then, the set $E[3]$ is not contained in $p_i(K)$ for all $i=1,2,3$.
\end{cor}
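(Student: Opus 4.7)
The plan is to exploit that a good cocycle produces an action of $G$ on $A$ that is free in codimension two. In particular, every element $u\in G$ for which $\rho(u)$ has $1$ as an eigenvalue must act without fixed points on $A$: otherwise, $\Fix(\Phi(u))$ would be a non-empty translate of $\ker(\rho(u)-\id)$ on $A$ and hence of positive dimension, contradicting the finiteness of the fixed locus.

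For both $G=\ZZ_3^2$ and $G=\He(3)$, the three elements $h$, $hk$, $hk^2$ have linear parts
\[ \rho(h)=\diag(1,\ze_3^2,\ze_3),\quad \rho(hk)=\diag(\ze_3,1,\ze_3^2),\quad \rho(hk^2)=\diag(\ze_3^2,\ze_3,1), \]
each with eigenvalue $1$ in exactly one distinct coordinate. Since $\tau$ is in standard form, $\tau(k)=0$, and the cocycle relation gives $\tau(hk^j)=\tau(h)=(a_1,a_2,a_3)$ for $j=0,1,2$. The key step is to translate the fixed-point-freeness of $\Phi(hk^j)$ into a condition on $K$: because multiplication by $\ze_3-1$, and by $\ze_3^2-1$, is a surjective isogeny of $E$, the image of $\rho(hk^j)-\id$ acting on $E^3$ is the coordinate plane obtained by annihilating the $(j+1)$-th coordinate. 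Descending to $A=E^3/K$, the fixed-point condition $(a_1,a_2,a_3)\in \im(\rho(hk^j)-\id\colon A\to A)$ becomes the existence of $(k_1,k_2,k_3)\in K$ with $a_{j+1}=k_{j+1}$, i.e.~$a_{j+1}\in p_{j+1}(K)$. Goodness of $\tau$ therefore forces $a_i\notin p_i(K)$ for $i=1,2,3$.

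Combining this with Lemma~\ref{le:a_i}, which asserts $a_i\in E[3]$, we conclude $E[3]\not\subset p_i(K)$ for each $i$. I do not anticipate any real obstacle here; the only mildly delicate point is the identification of the image of $\rho(hk^j)-\id$ as an endomorphism of $A$, which relies on the surjectivity of $(\ze_3-1)$ and $(\ze_3^2-1)$ as endomorphisms of $E$.
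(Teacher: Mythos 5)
Your proof is correct and follows essentially the same route as the paper: both reduce goodness to the fixed-point-freeness of $h$, $hk$, $hk^2$ (whose linear parts have eigenvalue $1$ in exactly one coordinate) and identify the existence of a fixed point with the condition $a_i\in p_i(K)$, then invoke $a_i\in E[3]$ from Lemma~\ref{le:a_i}. The only difference is that you spell out the step the paper leaves implicit, namely that $\im(\rho(hk^j)-\id)$ on $A$ is governed by the surjectivity of $\ze_3-1$ and $\ze_3^2-1$ on $E$, which is a correct and welcome elaboration.
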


\begin{proof}
	If we assume that $E[3]\subseteq p_i(K)$, then we can find an element in $K$ whose $i$-th coordinate equals $a_i$. Therefore, the elements $h$ (for $i=1$), $hk$ (for $i=2$), or $hk^2$ (for $i=3$) have fixed points, respectively that are not isolated since the linear parts of the actions of these elements have the eigenvalue 1. This contradicts  the assumption of $\tau$ to be good.
\end{proof}

\begin{prop}\label{prop:Kfixed}
	Each coordinate of any element in $K$ is fixed by multiplication with $\ze_3$, i.e., is contained in the group
\[\Fix_{\ze_3}(E):=\{z\in E\mid \ze_3\cdot z =z\}=\{0,t,-t\},\quad\mathrm{where}\quad t:=\tfrac{1}{3}(1+2\ze_3).\]
\end{prop}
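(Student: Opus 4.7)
The plan is to show, for every $v = (z_1, z_2, z_3) \in K$, that each coordinate $z_i$ lies in $\Fix_{\ze_3}(E)$. Since the isogeny $\mu\colon E^3 \to A$ of Proposition~\ref{prop:isog} is equivariant, $K$ is stable under the linear part of the $G$-action, which by Theorem~\ref{theo:NotSimply} acts diagonally on $E^3$ via $\rho(h)=\diag(1,\ze_3^2,\ze_3)$ and $\rho(k)=\ze_3\cdot\id$ (the extra generator in the $\He(3)$ case only permutes coordinates and is not needed). The second structural input is that $K$ contains no non-zero element supported on a single coordinate axis, as each inclusion $E_i \hookrightarrow A$ is injective.

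First I would prove an \emph{axis lemma}: whenever $(0, a, b) \in K$, both $a$ and $b$ belong to $\Fix_{\ze_3}(E)$. Applying $\rho(hk^2) = \diag(\ze_3^2, \ze_3, 1)$ to $(0, a, b)$ and subtracting yields $(0, (\ze_3 - 1)a, 0) \in K$, and the no-axis condition then forces $(\ze_3 - 1)a = 0$ in $E$. Similarly, $\rho(hk) = \diag(\ze_3, 1, \ze_3^2)$ gives $(0, 0, (\ze_3^2 - 1)b) \in K$, whence $(\ze_3^2-1)b = 0$, equivalent to $b \in \Fix_{\ze_3}(E)$ (as $\Fix_{\ze_3}(E) = \Fix_{\ze_3^2}(E)$).

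Next I would deduce that every coordinate of an element of $K$ lies in $E[3]$. For $v=(z_1, z_2, z_3) \in K$, the difference $\rho(h)v - v = (0, (\ze_3^2-1)z_2, (\ze_3-1)z_3)$ lies in $K$, so the axis lemma yields $(\ze_3 - 1) z_3 \in \Fix_{\ze_3}(E)$; hence $(\ze_3-1)^2 z_3 = -3\ze_3 \cdot z_3 = 0$ in $E$, forcing $z_3 \in E[3]$. The analogous arguments applied to $\rho(hk)v - v$ and $\rho(hk^2)v - v$ give $z_1, z_2 \in E[3]$, so $p_i(K) \subseteq E[3]$ for every $i$.

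Finally, since $\rho(k) = \ze_3 \cdot \id$ preserves $K$, each projection $p_i(K)$ is a $\Z[\ze_3]$-submodule of $E[3]$. The ring $\Z[\ze_3]/3$ is local (isomorphic to $\mathbb F_3[\epsilon]/(\epsilon^2)$ with $\epsilon := \ze_3 - 1$), and $E[3]$ is cyclic of length two over it, with unique proper nontrivial submodule $(\ze_3 - 1)E[3] = \Fix_{\ze_3}(E)$. Corollary~\ref{cor:p(E)K} excludes $p_i(K) = E[3]$, so $p_i(K) \in \{0, \Fix_{\ze_3}(E)\}$, which gives the claim. The crux of the argument will be the axis lemma, which converts the codimension-one freeness condition on $K$ into a pointwise constraint on its coordinates; everything downstream is an elementary module-theoretic classification over the local ring $\Z[\ze_3]/3$.
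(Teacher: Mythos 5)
Your proof is correct and follows essentially the same route as the paper: both arguments exploit the $\rho$-stability of $K$ together with the fact that $K$ contains no non-zero multiples of unit vectors to force each coordinate into $E[3]$ (the paper does this in one stroke via $(\rho(hk)-\id)\circ(\rho(hk^2)-\id)$, you in two stages via your axis lemma), and both then invoke Corollary~\ref{cor:p(E)K} to rule out $p_i(K)=E[3]$. Your closing step -- classifying the $\ZZ[\ze_3]$-submodules of $E[3]$ over the local ring $\ZZ[\ze_3]/3$ -- is just a repackaging of the paper's observation that $t_i$ and $\ze_3 t_i$ span $E[3]$ whenever they differ, so the two proofs are mathematically identical in substance.
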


\begin{proof}
	Let $G$ be one of the groups $\ZZ_3^2$ or $\He(3)$. For $u\in G$, we may view $\rho(u)$ as an automorphism of $E^3$ mapping $K$ to $K$. Let $(t_1,t_2,t_3)\in K$. Then, each coordinate $t_i$ is a $3$-torsion point of $E_i=E$: this follows from
	\begin{align*}
	(\rho(hk)-\id_{E^3})\circ(\rho(hk^2)-\id_{E^3})(t_1,t_2,t_3)&=3t_1\cdot (1,0,0),\\
	(\rho(h)-\id_{E^3})\circ(\rho(hk^2)-\id_{E^3})(t_1,t_2,t_3)&=3t_2\cdot (0,1,0),\\
	(\rho(h)-\id_{E^3})\circ(\rho(hk)-\id_{E^3})(t_1,t_2,t_3)&=3t_3\cdot (0,0,1)
	\end{align*}
	since $K$ contains no non-trivial multiples of unit vectors.
	Assume now that there is an element in $K$ having one coordinate $t_i$ that is not fixed by multiplication with $\ze_3$. Hence, $t_i\neq \ze_3t_i$ are two linearly independent elements in $E[3]\simeq \ZZ_3\times\ZZ_3$, and so, they span the whole group of $3$-torsion points. This would imply that $p_i(K)=E[3]$ -- a contradiction to Corollary~\ref{cor:p(E)K}.
\end{proof}
	
	Finally, we can prove that for a cocycle in standard form, not only the coefficients $a_i$, but also the coefficients $b_i$ belong to the set of $3$-torsion points of $E$. Hence, there are only finitely many possible actions and $3\tau$ is trivial in the first cohomology group. A priori, it is just clear that $\lvert G \rvert \tau$ is trivial (cf.~\cite[Corollary~10.2]{Brown}).	
	
\begin{cor}
	If $G=\He(3)$ and $\tau$ is a cocycle in standard form, then the elements $b_1,b_2$ and $b_3$ are $3$-torsion points of $E$.
\end{cor}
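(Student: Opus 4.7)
The strategy is to exploit the combination of Lemma~\ref{le:well-defined}(2) with Proposition~\ref{prop:Kfixed}. The key observation is that the cocycle identity for the commuting pair $g,k$ forces a very restrictive condition on the coordinates of the $b_i$.

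First, I would recall from Lemma~\ref{le:well-defined}(2) that the element
\[
v_3 = \bigl((\ze_3-1)b_1,\ (\ze_3-1)b_2,\ (\ze_3-1)b_3\bigr)
\]
is zero in $A=E^3/K$; equivalently, $v_3\in K$. Applying Proposition~\ref{prop:Kfixed} coordinatewise, each $(\ze_3-1)b_i$ lies in $\Fix_{\ze_3}(E)=\{0,t,-t\}$, which is exactly the kernel of multiplication by $\ze_3-1$ on $E$ (indeed, $3t=1+2\ze_3\equiv 0$ in $\mathbb Z[\ze_3]$ and $\Fix_{\ze_3}(E)$ has order $3$). Consequently,
\[
(\ze_3-1)^2\, b_i = 0 \quad \text{in } E \qquad \text{for } i=1,2,3.
\]

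Now I would perform the one-line ring computation in $\mathbb Z[\ze_3]$: using $\ze_3^2 = -1-\ze_3$, one gets $(\ze_3-1)^2 = \ze_3^2 - 2\ze_3 + 1 = -3\ze_3$. Substituting back, $-3\ze_3\, b_i = 0$ in $E$, and since $\ze_3$ is a unit of $\mathbb Z[\ze_3]$, this gives $3b_i=0$, so $b_i\in E[3]$, as desired.

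I do not anticipate any serious obstacle here: the entire argument reduces to combining two facts already proved (the cocycle relation $v_3=0$ and the rigidity of $K$ under the $\ze_3$-action) with the elementary norm identity $(\ze_3-1)^2=-3\ze_3$ in $\mathbb Z[\ze_3]$. The only mild subtlety to keep track of is that the $b_i$ live in $E$ rather than in $A$, so one must first pass from the vanishing of $v_3$ in $A$ to a statement about $K\subset E^3$ before applying Proposition~\ref{prop:Kfixed} to each coordinate separately.
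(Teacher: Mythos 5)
Your proof is correct and follows essentially the same route as the paper: both arguments pass from $v_3=0$ in $A$ to $v_3\in K$, invoke Proposition~\ref{prop:Kfixed} coordinatewise to conclude $(\ze_3-1)^2b_i=0$ in $E$, and finish with the identity $(\ze_3-1)^2=-3\ze_3$ and the fact that $\ze_3$ is a unit. No gaps.
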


\begin{proof}
	By Lemma~\ref{le:well-defined}, the vector $v_3=(\ze_3-1)\cdot(b_1,b_2,b_3)$ is zero in $A=E^3/K$. Hence, $v_3$ lies in $K$. Then,  Proposition~\ref{prop:Kfixed} tells us that the elements $(\ze_3-1)b_i$ are fixed by multiplication with $\ze_3$, and therefore, we get $\ze_3(\ze_3-1)b_i=(\ze_3-1)b_i$ in $E$. This can be rewritten as $0=(\ze_3-1)^2b_i=-3\ze_3 b_i$ in $E$, thus, $3b_i$ vanishes in $E$ since $-\ze_3$ is an automorphism.
\end{proof}

\begin{rem}\label{rem:Kernels}
	By Proposition~\ref{prop:Kfixed} and since $\Fix_{\ze_3}(E)\simeq \ZZ_3$, we can view $K$ as a subgroup of $\ZZ_3^3$ containing no (non trivial) multiples of the unit vectors $e_i$. There are $15$ subgroups  of $\mathbb Z_3^3$ with these two properties, the set of which we denote by $\mathcal K$.
\end{rem}

Until now, we have determined a pool of lattices of the abelian varieties and actions containing all possible candidates. To finally derive the fine classification, we treat the cases $G=\ZZ_3^2$ and $G=\He(3)$ separately.
 

\subsection{The Case $\Heis(3)$} In this subsection, we finish the classification for $G=\Heis(3)$. 
Here, only those elements in $\mathcal K$ stabilized by  $\rho(g)$ can occur as a kernel of $\mu$. 
Out of the $15$ possibilities found in Remark~\ref{rem:Kernels}, just the following three survive:

\medskip
\begin{center}
	\bgroup\def\arraystretch{1.3}\begin{tabular}{|c|l|} \hline 
	$\dim_{\ZZ_3}(K)$ & $K$ \\ \hline \hline
	$0$ & $\{0\}$ \\  \hline
	$1$ & $\langle(t,t,t)\rangle$ \\ \hline
	$2$ & $\langle (t,t,t), (t,-t,0)\rangle$ \\ \hline
	\end{tabular}\egroup
\end{center}

\medskip

We observe in particular that the above table shows that each element $(t_1,t_2,t_3) \in K$ has the property that $t_1+t_2+t_3 = 0$ in $E$. This observation is useful to prove a simple criterion for a cocycle $\tau\colon \Heis(3)\to A = E^3/K$ to be good. 

\begin{lemma} \label{le:freenessHe}
	 A coycle $\tau\colon \He(3)\to A$ in standard form is good if and only if
	\begin{enumerate}
		\item $b_1+b_2+b_3\neq 0$ in $E$,
		\item $a_1$ is never the first coordinate of an element in $K$,
		\item $\ze_3^2(b_1+b_2)+b_3+\ze_3^2(a_1+a_3)+a_2\neq0$ in $E$, and
		\item $\ze_3(b_1+b_2)+b_3-\ze_3(a_1+a_2)-a_3\neq 0$ in $E$.
	\end{enumerate}
\end{lemma}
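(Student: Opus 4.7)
The plan is to reduce the goodness of $\tau$ to a fixed-point check on four carefully chosen representatives of conjugacy classes and then to translate each check into an algebraic condition on the coefficients $a_i,b_i$.

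First, I would invoke the observation used in Remark~\ref{rem:resolutions}: every element of $\He(3)\setminus\langle k\rangle$ has a linear part with eigenvalue~$1$, so its fixed locus on $A$, if non-empty, is automatically positive-dimensional. Conversely, $\rho(k)=\ze_3\cdot\id$ has no eigenvalue~$1$, and the fixed points of $k$ (hence also $k^2$) on $A$ form the kernel of the isogeny $\ze_3-1$ of $A$, which is always finite and non-empty. Therefore, $\tau$ is good precisely when no element of $\He(3)\setminus\langle k\rangle$ acts with a fixed point on $A$.

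Next, I would cut down the number of checks. Conjugate elements share fixed loci, and because every non-trivial element of $\He(3)$ has order $3$ one has $\Fix(\Phi(u))=\Fix(\Phi(u^{-1}))$. Using the normal form $g^ih^jk^\ell$, the eight non-central conjugacy classes are indexed by $(i,j)\in(\ZZ/3)^2\setminus\{(0,0)\}$, and inversion pairs them into four pairs; I would pick the representatives $h$, $g$, $gh$, $gh^2$, one per pair.

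For each such $u$, the element $\Phi(u)$ has a fixed point on $A$ iff $\tau(u)\in(\rho(u)-\id)(E^3)+K$, equivalently, iff every linear functional $\phi\colon E^3\to E$ killing $(\rho(u)-\id)(E^3)$ sends $\tau(u)$ into $\phi(K)$. These $\phi$'s come from the left eigenvectors of $\rho(u)$ for eigenvalue~$1$: the first projection $p_1$ for $u=h$, and $\phi_g=(1,1,1)$, $\phi_{gh}=(1,1,\ze_3)$, $\phi_{gh^2}=(1,1,\ze_3^2)$ for the other three. A direct evaluation on each of the three possible kernels $K$ (using, e.g., $3t\equiv 0$, $(2+\ze_3)t\equiv\ze_3\equiv 0$, $(1-\ze_3)t\equiv -\ze_3^2\equiv 0$ in $E$) shows $\phi_u(K)=\{0\}$ for $u\in\{g,gh,gh^2\}$. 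Hence, for these, non-existence of a fixed point becomes $\phi_u(\tau(u))\neq 0$ in $E$; after computing $\tau(gh)$ and $\tau(gh^2)$ from the cocycle relation, multiplying by a suitable power of $\ze_3$ and invoking $a_i\in E[3]$ from Lemma~\ref{le:a_i}, these match conditions~(1), (3), (4). For $u=h$, the image equals $\{0\}\times E\times E$, so the condition is $a_1\notin p_1(K)$, which is exactly (2).

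The main technical obstacle will be the final algebraic step: rewriting $\phi_{gh}(\tau(gh))$ and $\phi_{gh^2}(\tau(gh^2))$ precisely in the forms displayed in (3) and (4), where one must carefully track the unit multiples $\ze_3^{\pm 1}$ and absorb multiples of $3a_i$. Everything else amounts to routine eigenvector and cocycle bookkeeping, combined with a finite case-check on the three surviving kernels.
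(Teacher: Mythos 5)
Your proposal is correct and follows essentially the same route as the paper: both reduce, via conjugacy classes and the order-three/inversion argument, to checking that $g$, $h$, $gh$ and $gh^2$ act freely, and both turn each check into a unit-weighted coordinate sum that kills $(\rho(u)-\id)(E^3)$ and $K$ (the paper does this by multiplying the fixed-point equation by powers of $\ze_3$ and summing the coordinates, you by evaluating the left-eigenvector functional $\phi_u$). The only cosmetic difference is that the paper verifies the converse by exhibiting an explicit fixed point, whereas your criterion $\tau(u)\in(\rho(u)-\id)(E^3)+K$ together with $\ker\phi_u=(\rho(u)-\id)(E^3)$ yields both directions at once.
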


\begin{proof}
	The fixed points of the action corresponding to $\tau$ are  isolated if and only if the elements $k$ and $k^2$ are the only non-trivial elements acting with fixed points (cf. Remark~\ref{rem:resolutions}). 
 An element $u\in\He(3)$ acts freely if and only if $u^2$ does so (every non-trivial element has order three). Thus, considering the conjugacy classes, we conclude that the cocylce $\tau$ is good if and only if $g,h, gh$ and $gh^2$ act freely.
Suppose that $\Phi(gh)$ has a fixed point $z=(z_1,z_2,z_3) \in A$. This means that  
\[
(\ze_3z_3-z_1+a_3+b_1, \ z_1-z_2+a_1+b_2, \ \ze_3^2z_2-z_3+a_2+b_3) \in K 
\]
or equivalently 
\[
(z_3-\ze_3^2z_1+\ze_3^2(a_3+b_1), \ \ze_3^2z_1-\ze_3^2z_2+\ze_3^2(a_1+b_2), \ \ze_3^2z_2-z_3+a_2+b_3)\in K
\]
because the coordinates of elements in $K$ are fixed by $\ze_3$.  
Hence, the sum of the coordinates
\[
\ze_3^2(b_1+b_2)+b_3+\ze_3^2(a_1+a_3)+a_2
\]
is zero in $E$ by the above observation. Conversely, if this term is zero, then $z=(0,a_1+b_2,-\ze_3^2(a_3+b_1))$ is a fixed point of 
 $\Phi(gh)$. The freeness of the action of $g, h $ and $gh^2$ gives the other three conditions in a similar way.
\end{proof}

The trivial kernel $K=\{0\}$ can be excluded since we can not find a good cocycle on the corresponding abelian variety:

\begin{prop}\label{prop:ExcludeKtriv}
In the case $K=\{0\}$, there is no good cohomology class in standard form.
\end{prop}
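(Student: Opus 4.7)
The plan is to derive an immediate contradiction between the cocycle condition in Lemma~\ref{le:well-defined} and the goodness criterion in Lemma~\ref{le:freenessHe}, with no case analysis needed.

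The key observation is that when $K=\{0\}$, the abelian variety is $A=E^3$, so any equation of the form ``$v=0$ in $A$'' is literally an equation in $E^3$. Now Lemma~\ref{le:well-defined}(2) forces the vector
\[
v_2 = (b_1+b_2+b_3,\ b_1+b_2+b_3,\ b_1+b_2+b_3)
\]
to vanish in $A$; since $A=E^3$, this means $b_1+b_2+b_3=0$ in $E$. But Lemma~\ref{le:freenessHe}(1) requires exactly the opposite, namely $b_1+b_2+b_3\neq 0$ in $E$, for the fixed locus to be finite. The two conditions are incompatible, so no cocycle $\tau\colon \He(3)\to A$ in standard form can be good when $K=\{0\}$.

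The only thing one might worry about is whether the discussion is really reduced to cocycles in standard form, but Remark~\ref{rem:actionStandard} handles this: every faithful holomorphic action of $\He(3)$ on $A$ is, after a change of origin, given by a cocycle in standard form, so the non-existence of a good cocycle in standard form is equivalent to the non-existence of a good cohomology class. I expect this step to be essentially a one-line argument; there is no genuine obstacle here, as the assumption $K=\{0\}$ makes the ``cocycle is well-defined'' relation $v_2=0$ as rigid as possible.
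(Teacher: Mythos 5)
Your argument is correct and is essentially identical to the paper's own proof: both derive $b_1+b_2+b_3=0$ in $E$ from the vanishing of $v_2$ in $A=E^3$ (Lemma~\ref{le:well-defined}) and contradict condition (1) of Lemma~\ref{le:freenessHe}. The additional remark about reducing to cocycles in standard form via Remark~\ref{rem:actionStandard} is harmless and already implicit in the statement of the proposition.
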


\begin{proof}
	Let us assume the contrary. Then, as the vector $v_2$ is zero in $A$ by Lemma~\ref{le:well-defined}, this  implies that $b_1+b_2+b_3=0$ in $E$, a contradiction to Lemma~\ref{le:freenessHe}.
\end{proof}

\begin{rem}\label{rem:ActHe}
	We can conclude that $A = \CC^3/\Lambda$, where $\Lambda$ is one of the lattices
	\begin{align*}
		\Lambda_1 := \ZZ[\zeta_3]^3+ \ZZ( t,  t,  t) \ \ \ \text{ or } \ \ \ \Lambda_2 := \Lambda_1 + \ZZ( t,  -t, 0)
	\end{align*}
	corresponding to the two remaining kernels $K_1$ and $K_2$.\\
	It is now easy to construct all good cocycles $\tau$ for both tori: run over all $a_i$ and $b_j$ in $E[3]$ and check if the vectors $v_1,\ldots,v_4$ of Lemma~\ref{le:well-defined} are contained in $\Lam_i$ and if the conditions of Lemma~\ref{le:freenessHe} are satisfied. 
	Running a MAGMA implementation, we find $486$ cocycles for the  lattice $\Lambda_1$ and $1458$ for the lattice $\Lambda_2$. 
	For each $\Lambda_i$, they  form $6$  good cohomology classes in $H^1(\Heis(3),\mathbb C^3/\Lambda_i)$.
\end{rem}

To classify the quotients, we determine the  sets $\mathcal N_{\mathbb R}(\Lambda,\Lambda')$ and $\mathcal N_{\mathbb C}(\Lambda,\Lambda')$, where $\Lambda$ and $\Lambda'$ belong to $\{\Lam_1,\Lam_2\}$. 

\begin{rem}\label{rem:EndoHe}
	A matrix $C\in\GL(6,\RR)$ is contained in $\sN_\RR(\Lam,\Lam')$  if and only if 
	\begin{align}\label{eq:ConNR}
		C\varphi_\RR C^{-1}=\rho_\RR\circ\varphi\quad\mathrm{for\:some}\quad \varphi\in\Aut(\He(3))
	\end{align}
	and $C\Lam=\Lam'$. In particular, $\rho_\RR$ and $\rho_\RR\circ\varphi$ are equivalent, irreducible real representations. Hence, they have the same character, so $\varphi$ fixes the character $\chi_\RR$ of $\rho_\RR$. Conversely, for every $\varphi\in\Stab(\chi_\RR)$, there exists a matrix $C_\varphi\in\GL(6,\RR)$ fulfilling \eqref{eq:ConNR}. By Schur's Lemma, the matrix $C_\varphi$ is unique up to a non-zero element in the endomorphism algebra $\End_{\He(3)}(\rho_\RR)$ which is isomorphic to $\CC$.  Any $\varphi \in \Stab(\chi_{\mathbb R})$ either stabilizes the character $\chi$ of $\rho$ or maps $\chi$ to $\overline{\chi}$ because 
	$\chi_{\mathbb R}$ is equal to the sum of the complex characters $\chi$ and $\overline{\chi}$.  Moreover, a matrix defining an 
	equivalence between $\rho_{\mathbb R}$  and $\rho_{\mathbb R}\circ \varphi$ is $\mathbb C$-linear precisely if $\chi$ is stabilized and $\mathbb C$-antilinear if and only if $\chi$ is mapped to $\overline{\chi}$.
	In particular, $C_{\varphi}$ is $\mathbb C$-linear if and only if $\varphi \in \Stab(\chi)$.\\
	Using furthermore that $\rho_\RR$ is faithful, we get a faithful semi-projective representation
	\[\Xi\colon\Stab(\chi_\RR)\longrightarrow \PGL(3,\CC)\rtimes\ZZ_2,\quad \varphi\mapsto [C_\varphi].\]
This leads to the descriptions:
\begin{itemize}
	\item $\sN_\RR(\Lam,\Lam')=\{C\in\GL(3,\CC)\rtimes\ZZ_2\mid [C]\in\im(\Xi),\:C\Lam=\Lam'\}$ and
	\vspace{3mm}
	\item $\sN_\CC(\Lam,\Lam')=\sN_\RR(\Lam,\Lam')\cap \GL(3,\CC)=\{C\in\GL(3,\CC)\mid [C]\in\im(\Xi_{\mid \Stab(\chi)}),\:C\Lam=\Lam'\}$.
\end{itemize}
\end{rem}

Note that the stabilizer group of $\chi_\RR$ is the whole automorphism group of $\He(3)$ since $\chi_\RR$ is the only irreducible real character of degree six, whereas the stabilizer group of $\chi$ is a subgroup of index two of $\Stab(\chi_\RR)=\Aut(\He(3))\simeq \AGL(2,\mathbb F_3)$. 

\begin{lemma}\label{le:units}
Let $C_\varphi$ be a representative of the class $\Xi(\varphi)$ such that $C_{\varphi} \Lambda =\Lambda$. Then, the other  representatives with this property are $\mu C_\varphi$, where $\mu$ is one of the six units of $\ZZ[\ze_3]$.
\end{lemma}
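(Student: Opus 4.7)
The plan is to leverage the structural observation made in the preceding remark: any two lifts of the class $\Xi(\varphi) \in \PGL(3,\CC) \rtimes \ZZ_2$ differ by multiplication by a nonzero element of the endomorphism algebra $\End_{\He(3)}(\rho_\RR) \cong \CC$. Thus, if $C_\varphi$ is one representative with $C_\varphi \Lam = \Lam$, any other representative has the form $C_\varphi' = \mu C_\varphi$ for some $\mu \in \CC^*$ acting as a scalar on $\CC^3$. Combining $C_\varphi \Lam = \Lam$ with the requirement $C_\varphi' \Lam = \Lam$ immediately yields the constraint $\mu \Lam = \Lam$, so the proof reduces to identifying which complex scalars preserve $\Lam \in \{\Lam_1, \Lam_2\}$.

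Next I would show that $\mu \in \ZZ[\ze_3]$. Since both lattices contain $\ZZ[\ze_3]^3$, the inclusion $\mu\Lam \subseteq \Lam$ forces $\mu e_j \in \Lam$ for each standard basis vector $e_j$ of $\CC^3$. Using the explicit generators of $\Lam_1 = \ZZ[\ze_3]^3 + \ZZ(t,t,t)$ and $\Lam_2 = \Lam_1 + \ZZ(t,-t,0)$, a short coordinate-wise computation shows that writing $\mu e_j$ as a $\ZZ$-linear combination of these generators forces the coefficients of $(t,t,t)$ and $(t,-t,0)$ to be divisible by $3$ (because the third coordinate of $\mu e_1$ must vanish in $E$, etc.). This collapses the extra generators into $\ZZ[\ze_3]$, yielding $\mu \in \ZZ[\ze_3]$.

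Finally, applying the same argument to $\mu^{-1}$, which also satisfies $\mu^{-1}\Lam = \Lam$, gives $\mu^{-1} \in \ZZ[\ze_3]$. Hence $\mu$ is a unit of $\ZZ[\ze_3]$, and the unit group of this ring is exactly the set of six sixth roots of unity $\{\pm 1, \pm \ze_3, \pm \ze_3^2\}$. Conversely, each such unit visibly preserves $\Lam_1$ and $\Lam_2$, since both lattices are $\ZZ[\ze_3]$-modules (indeed, $\rho(k) = \ze_3 \cdot \id$ stabilizes $\Lam$, and so does multiplication by $-1$). This yields exactly six valid representatives, as claimed.

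I do not expect a genuine obstacle: the only mildly technical step is the coordinate-wise verification that $\mu e_j \in \Lam_i$ implies $\mu \in \ZZ[\ze_3]$, but this is a direct computation for the two explicit lattices and hinges only on the fact that $3t = 1 + 2\ze_3 \in \ZZ[\ze_3]$ while $t \notin \ZZ[\ze_3]$.
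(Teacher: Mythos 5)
Your proposal is correct and follows essentially the same route as the paper: reduce to $\mu\Lambda=\Lambda$ using $C_\varphi\Lambda=\Lambda$ and the fact that representatives of $\Xi(\varphi)$ differ by a complex scalar, deduce $\mu\in\ZZ[\ze_3]$ from $\mu\cdot(1,0,0)\in\Lambda$, and apply the same argument to $\mu^{-1}$ to conclude that $\mu$ is one of the six units. The only difference is that you spell out the coordinate check and the converse (that each unit does preserve $\Lambda_1,\Lambda_2$), which the paper leaves implicit.
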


\begin{proof}
	Let $\mu\in\CC^\ast$ such that $\mu C_\varphi\Lam=\Lam$. Then, $\mu\Lam=\mu C_\varphi\Lam =\Lam$. Since $(1,0,0)$ belongs to $\Lam$, we conclude $\mu\in\ZZ[\ze_3]$. Furthermore, we have $\mu^{-1}\Lam=\Lam$, so the inverse of $\mu$ is an Eisenstein integer as well.
\end{proof}

According to this lemma and the previous discussion, the groups $\sN_\RR(\Lam_i)$ and $\sN_\CC(\Lam_i)$ are finite of order at most $6\cdot \lvert\Aut(\He(3))\rvert=6\cdot 432$ and $6\cdot 216$, respectively. Explicitly, we can describe them as:

\begin{prop}\label{prop:NormalizerHe}
	For both lattices $\Lam_1$ and $\Lam_2$, the normalizer groups $\sN_\RR(\Lam_i)$ are generated by $C_1,\ldots, C_4$ and $\sN_\CC(\Lam_i)$ is generated by $C_1, C_2, C_3$, where
	\[ C_1:=\begin{pmatrix}
		\ze_3 & 0 & 0\\ 0&\ze_3^2& 0\\0&0&1
	\end{pmatrix}, \quad C_2:=-t\cdot\begin{pmatrix}
		1&\ze_3^2&\ze_3^2\\\ze_3^2 & 1& \ze_3^2\\ \ze_3^2 & \ze_3^2 & 1
	\end{pmatrix},\quad C_3:=t\cdot\begin{pmatrix}
		1&1&1\\ 1& \ze_3^2 &\ze_3 \\ 1& \ze_3 &\ze_3^2
	\end{pmatrix}\]
	and $C_4$ is the matrix corresponding to the map $(z_1,z_2,z_3)\mapsto (\bar{z}_1,\bar{z}_2,\bar{z}_3)$.
\end{prop}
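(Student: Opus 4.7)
The plan is to exploit the semi-projective framework of Remark~\ref{rem:EndoHe} together with Lemma~\ref{le:units}: determining $\sN_\RR(\Lam_i)$ and $\sN_\CC(\Lam_i)$ reduces to identifying which automorphisms of $\He(3)$ admit a lattice-preserving lift and then lifting each such automorphism modulo the scalar ambiguity $\ZZ[\ze_3]^\times \cdot \id$. Since $\He(3)$ has exactly two faithful irreducible $3$-dimensional complex representations, namely $\rho$ and $\bar\rho$, distinguished by their central characters, the real character $\chi_\RR = \chi + \bar\chi$ is fixed by all of $\Aut(\He(3))$, whereas $\chi$ is fixed only by an index-two subgroup. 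Consequently $\Stab(\chi_\RR) = \Aut(\He(3))$ has order $432$ and $\Stab(\chi)$ has order $216$, and by Lemma~\ref{le:units} the kernels of the natural projections $\sN_\RR(\Lam_i) \to \PGL(3,\CC) \rtimes \ZZ_2$ and $\sN_\CC(\Lam_i) \to \PGL(3,\CC)$ coincide with $\ZZ[\ze_3]^\times \cdot \id$.

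First I would verify the forward containment $\langle C_1, C_2, C_3, C_4 \rangle \subseteq \sN_\RR(\Lam_i)$. For each $j \in \{1,2,3,4\}$, a direct calculation of $C_j \rho_\RR(u) C_j^{-1}$ on the generators $u \in \{g, h, k\}$ exhibits an automorphism $\varphi_j \in \Aut(\He(3))$; one checks that $\varphi_1, \varphi_2, \varphi_3$ stabilize $\chi$ while $\varphi_4$, induced by entrywise complex conjugation, swaps $\chi$ with $\bar\chi$. Next I would verify that each $C_j$ preserves both $\Lam_1$ and $\Lam_2$ by evaluating on their natural $\ZZ$-bases; every image vector is rewritten inside $\Lam_i$ using only the defining identity $3t = 1 + 2\ze_3$ and the arithmetic of $\ZZ[\ze_3]$. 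This places $C_1, C_2, C_3$ in $\sN_\CC(\Lam_i)$ and additionally $C_4$ in $\sN_\RR(\Lam_i)$.

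For the reverse inclusion I would compare orders. The setup above yields the a priori bounds $|\sN_\CC(\Lam_i)| \le 6 \cdot 216 = 1296$ and $|\sN_\RR(\Lam_i)| \le 6 \cdot 432 = 2592$, so it suffices to show that $\langle \varphi_1, \varphi_2, \varphi_3 \rangle = \Stab(\chi)$ and $\langle \varphi_1, \varphi_2, \varphi_3, \varphi_4 \rangle = \Aut(\He(3))$, and that all six Eisenstein units arise as scalar values of suitable words in the $C_j$, so that the kernel of $\langle C_1, C_2, C_3 \rangle \to \Stab(\chi)$ already fills out $\ZZ[\ze_3]^\times \cdot \id$. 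The main obstacle is precisely this generation step: identifying the $\varphi_j$ inside $\Aut(\He(3)) \simeq \AGL(2, \FF_3)$ and checking that they generate the required subgroups is not transparent by inspection, and I would conclude via a MAGMA computation, consistent with the computational methodology announced in the introduction.
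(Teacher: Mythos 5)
Your proposal is correct and follows essentially the same route as the paper: verify that the $C_j$ lie in the relevant normalizers, use the a priori bound $6\cdot|\Stab(\chi)|$ (resp. $6\cdot|\Aut(\He(3))|$) coming from Lemma~\ref{le:units} and the semi-projective representation $\Xi$, and close the generation step by a MAGMA order computation, with $C_4$ supplying the antilinear coset. The paper phrases the last step as a single order count ($|\langle C_1,C_2,C_3\rangle|=6\cdot 216$) rather than splitting it into surjectivity onto $\Stab(\chi)$ plus realizing all six units, but these are equivalent.
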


\begin{proof}
The matrices $C_1$, $C_2$ and $C_3$ are contained in $\sN_\CC(\Lam_i)$. A MAGMA-computation shows that they generate a subgroup of  $\sN_\CC(\Lam_i)$ of order $6\cdot 216$, hence this subgroup is already the full normalizer group. Since $\sN_\CC(\Lam_i)$ has index two in $\sN_\RR(\Lam_i)$ and $C_4\in \sN_\RR(\Lam_i)$ is $\CC$-antilinear, the claim follows.
\end{proof}

The proof shows in particular, that any class in the image of $\Xi$ has a representative mapping the lattice to itself and that the representative  is independent of the choice of the lattice.

\begin{prop}\label{prop:nothomeo}
	The set $\mathcal N_{\mathbb R}(\Lambda_1,\Lambda_2)$ is empty. In particular, quotients with respect to the group $\Heis(3)$ corresponding to different lattices are topologically distinct.
\end{prop}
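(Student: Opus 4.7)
My plan is to suppose the existence of some $C \in \sN_\RR(\Lambda_1, \Lambda_2)$ and derive a contradiction; the topological distinction of the quotients then follows immediately from Proposition~\ref{prop:ConsBieb} together with Remark~\ref{rem:AffinitiesBieb}, because any homeomorphism between the two quotients would produce an affine map whose linear part lies in $\sN_\RR(\Lambda_1, \Lambda_2)$. By the remark right after Proposition~\ref{prop:NormalizerHe}, for every class $[C] \in \im(\Xi)$ there is a representative $C'$ which fixes both $\Lambda_1$ and $\Lambda_2$ simultaneously. The endomorphism-algebra discussion in Remark~\ref{rem:EndoHe} then yields $C = \mu C'$ for some $\mu \in \CC^*$, regardless of whether $C$ is $\CC$-linear or $\CC$-antilinear. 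Hence $C\Lambda_1 = \mu C'\Lambda_1 = \mu\Lambda_1$, and the assertion reduces to the purely lattice-theoretic statement that no scalar $\mu \in \CC^*$ satisfies $\mu\Lambda_1 = \Lambda_2$.

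To rule this out I intend to use two elementary invariants. First, the index $[\Lambda_2 : \Lambda_1]$ equals $3$: the vector $(t,-t,0)$ lies in $\Lambda_2\setminus \Lambda_1$ (otherwise one would need $-2t \in \ZZ[\ze_3]$, which fails because $3t = 1+2\ze_3$ is a prime of norm $3$), while $3(t,-t,0) \in \ZZ[\ze_3]^3 \subset \Lambda_1$. Comparing covolumes in $\CC^3 \cong \RR^6$ then forces $|\mu|^6 = 1/3$. Second, a short projection calculation shows that $(\mu,0,0) \in \Lambda_2$ forces $\mu \in \ZZ[\ze_3]$: writing a general element of $\Lambda_2$ as $(a_1,a_2,a_3)+n(t,t,t)+m(t,-t,0)$ with $a_i \in \ZZ[\ze_3]$ and $n,m \in \ZZ$, the vanishing of the second and third coordinates forces $3 \mid n$ and $3 \mid m$, whence the first coordinate automatically lies in $\ZZ[\ze_3]$. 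Combining the two observations: $\mu \in \ZZ[\ze_3]$ implies $|\mu|^2 = \mu\bar\mu \in \ZZ_{\geq 0}$, so $|\mu|^6 \in \ZZ_{\geq 0}$, contradicting $|\mu|^6 = 1/3$.

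I do not anticipate any serious obstacle. The two pieces that deserve attention are, first, the reduction step $C = \mu C'$, which hinges on the remark after Proposition~\ref{prop:NormalizerHe} asserting that the lattice-preserving representative is independent of the lattice (so that the \emph{same} $C'$ fixes both $\Lambda_1$ and $\Lambda_2$), and second, the index computation $[\Lambda_2 : \Lambda_1] = 3$, which uses nothing beyond the arithmetic of the prime $1+2\ze_3$ above $3$ in $\ZZ[\ze_3]$. Everything else is a one-line coordinate check.
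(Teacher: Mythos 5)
Your argument is correct and follows the paper's strategy: both proofs reduce, via the common lattice-preserving representative of the class $[C]\in\im(\Xi)$ guaranteed by the remark after Proposition~\ref{prop:NormalizerHe}, to excluding a complex scalar $\mu$ with $\mu\Lambda_1=\Lambda_2$. The only difference is the endgame: the paper shows such a scalar would have to be a unit of $\ZZ[\ze_3]$, so that $C$ would in fact lie in $\sN_\RR(\Lambda_1)$ by Lemma~\ref{le:units}, whereas you rule it out by playing the covolume constraint $|\mu|^6=[\Lambda_2:\Lambda_1]^{-1}=1/3$ against the integrality of the norm of $\mu\in\ZZ[\ze_3]$ --- both are sound one-line finishes.
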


\begin{proof}
	Assume the converse and choose $C\in \mathcal N_{\mathbb R}(\Lambda_1,\Lambda_2)$. Then, as $[C]\in\im(\Xi)$, we can find an element $\la\in\CC^\ast$ such that $\la C\in \sN_\RR(\Lam_1)$. Thus, $\la \Lam_2=\la C \Lam_1=\Lam_1$, which implies $\la \in \ZZ[\ze_3]^\ast$. By Lemma~\ref{le:units}, $C$ itself is an element of $\sN_\RR(\Lam_1)$ -- a contradiction to $C\Lam_1=\Lam_2$.
\end{proof}

In order to decide whether two given good cohomology classes belong to the same $\mathcal N_{\mathbb C}(\Lambda)$- or $\mathcal N_{\mathbb R}(\Lambda)$-orbit, we determine the possible coboundaries $d$ occurring in condition~(2) of Remark \ref{rem:AffinitiesBieb}(a).

\begin{prop} \label{prop:TParts-Heis3}
Let $X$ and $X'$ be quotients of $A=\CC^3/\Lam$ corresponding to  good cocycles of  $\He(3)$ in standard form. Then, the translation part $d$  of an affine  homeomorphism 
$\widehat{\alpha}\colon X \to X'$
 is one of the 27 fixed points of the automorphism $\ze_3\cdot \id_A$.
\end{prop}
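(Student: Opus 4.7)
The plan is to substitute the central element $u=k$ into condition~(b) of Remark~\ref{rem:AffinitiesBieb}, which reads
\[
(\rho'_\RR(u)-\id)d=C\tau(\varphi^{-1}(u))-\tau'(u)\qquad\text{in }A,
\]
and observe that the right-hand side vanishes. For this, note that $\langle k\rangle$ coincides with the commutator subgroup (and the center) of $\He(3)$, hence is a characteristic subgroup. Any $\varphi\in\Aut(\He(3))$ therefore satisfies $\varphi^{-1}(k)\in\{k,k^{2}\}$. Since $\tau$ is in standard form, we have $\tau(k)=0$, and the cocycle relation $0=\tau(k\cdot k^{2})=\rho(k)\tau(k^{2})+\tau(k)$ yields $\tau(k^{2})=0$ as well. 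The same holds for $\tau'$, so both summands on the right-hand side vanish.

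The left-hand side equals $(\rho(k)-\id)d=(\ze_3-1)d$ because $\rho=\rho'$ (the analytic representation is uniquely determined up to equivalence by Theorem~\ref{theo:NotSimply} and we are classifying actions on the same $A$). Consequently, $(\ze_3-1)d\in\Lambda$, i.e., $d$ represents a fixed point of $\ze_3\cdot\id_A$ on $A$.

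It remains to see that the set of such fixed points has cardinality $27$. The lattice $\Lambda$ is stable under $\rho(k)=\ze_3\cdot\id$ and is therefore naturally a $\ZZ[\ze_3]$-module; from the discussion preceding Remark~\ref{rem:ActHe}, its $\ZZ[\ze_3]$-rank is $3$. Multiplication by $\ze_3-1$ on $\ZZ[\ze_3]$ has norm $N(\ze_3-1)=3$, so on $\Lambda$ (viewed as a free $\ZZ$-module of rank $6$) it has determinant $3^{3}=27$. Hence
\[
\bigl|\Fix(\ze_3\cdot\id_A)\bigr|\;=\;\bigl|(\ze_3-1)^{-1}\Lambda/\Lambda\bigr|\;=\;\bigl|\Lambda/(\ze_3-1)\Lambda\bigr|\;=\;27,
\]
independently of whether $\Lambda=\Lambda_{1}$ or $\Lambda=\Lambda_{2}$.

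The only subtle point is checking that the right-hand side of the key equation really vanishes for \emph{every} admissible $\varphi$; this is where the characteristic nature of $\langle k\rangle$ in $\He(3)$ and the cocycle identity are both needed. The remaining computation is a rank/norm calculation in $\ZZ[\ze_3]$ and is routine.
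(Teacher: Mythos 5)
Your proof is correct and follows essentially the same route as the paper: evaluate condition~(b) of Remark~\ref{rem:AffinitiesBieb} at the central element $u=k$, use that any automorphism sends $k$ to $k$ or $k^2$ and that $\tau(k)=\tau(k^2)=0$ in standard form, so $(\ze_3-1)d=0$ in $A$. Your additional verification that the fixed-point set has exactly $27$ elements (via the norm of $\ze_3-1$ on the rank-$3$ $\ZZ[\ze_3]$-module $\Lambda$) is a correct detail the paper leaves implicit.
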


\begin{proof}
	Since $k$ generates the center of $\He(3)$, any automorphism of $\He(3)$ maps $k$ either to $k$ or to $k^2$. Evaluating condition~(b) of Remark \ref{rem:AffinitiesBieb} in $u=k$ implies the claim because $\tau(k)=\tau(k^2)=0$.
\end{proof}

\begin{prop} \label{prop:Heis-Class}
For each lattice $\Lam_1$ and $\Lam_2$, there exists one and only one 
 biholomorphism class of singular three-dimensional quotients with trivial canonical class by faithful, translation-free actions of $\Heis(3)$  having isolated fixed points. They are topologically distinct. 
\end{prop}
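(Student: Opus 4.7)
The proof splits naturally into showing that the two lattices yield topologically distinct quotients and, separately, that within each lattice all good cohomology classes produce biholomorphic quotients.

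For the first part, any homeomorphism between a quotient built from $\Lambda_1$ and one built from $\Lambda_2$ would, by Proposition~\ref{prop:ConsBieb}, lift to an affine transformation whose linear part lies in $\mathcal{N}_\RR(\Lambda_1,\Lambda_2)$. Since Proposition~\ref{prop:nothomeo} asserts that this set is empty, no such homeomorphism exists, and a fortiori no biholomorphism. Thus the two lattices give rise to topologically distinct quotients.

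Fix now $i \in \{1,2\}$. By Remark~\ref{rem:ActHe}, the good cocycles in standard form on $A_i = \CC^3/\Lambda_i$ represent exactly six cohomology classes in $H^1(\He(3), A_i)$. Two such classes correspond to biholomorphic quotients if and only if they lie in the same orbit for the action $[\tau] \mapsto [C \ast \tau]$ of $\mathcal{N}_\CC(\Lambda_i)$; this is the content of items (a) and (b) of Remark~\ref{rem:AffinitiesBieb}. Proposition~\ref{prop:NormalizerHe} supplies explicit generators $C_1, C_2, C_3$ of $\mathcal{N}_\CC(\Lambda_i)$, and each of them induces an automorphism $\varphi_{C_j}$ of $\He(3)$ via conjugation on the analytic representation. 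Proposition~\ref{prop:TParts-Heis3} further restricts the admissible translation part of any inducing affine map to one of the 27 fixed points of $\ze_3 \cdot \id_{A_i}$, so that the test of whether $C_j \ast \tau$ is cohomologous to another listed representative reduces to a finite enumeration of coboundary shifts.

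The verification is carried out by a MAGMA computation, following the scheme outlined at the beginning of Section~\ref{sec:FineClass}: list the six cohomology class representatives; for each generator $C_j$ and each representative $\tau$, compute $C_j \cdot (\tau \circ \varphi_{C_j}^{-1})$ and match it against the list modulo one of the 27 admissible coboundaries $\rho(\cdot)d - d$. The expected output is that the six classes collapse into a single $\mathcal{N}_\CC(\Lambda_i)$-orbit, producing for each lattice exactly one biholomorphism class, represented by $Z_7$ and $Z_8$ of Theorem~\ref{theo:MainTheorem}. The main obstacle is organizational rather than conceptual: one has to encode the automorphisms $\varphi_{C_j}$ on the generators of $\He(3)$, the $\He(3)$-module structure of $A_i$ via $\rho$, and the coboundary check in the quotient lattice $\Lambda_i$, so that the orbit computation is performed with the correct identifications, but no further theoretical input is required beyond the propositions already established.
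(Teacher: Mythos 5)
Your proposal is correct and takes essentially the same approach as the paper: topological distinctness of the two lattices via the emptiness of $\mathcal N_{\RR}(\Lambda_1,\Lambda_2)$ (Proposition~\ref{prop:nothomeo}), and a MAGMA orbit computation showing the six good cohomology classes for each $\Lambda_i$ form a single $\mathcal N_{\CC}(\Lambda_i)$-orbit, with the coboundary shifts $d$ constrained by Proposition~\ref{prop:TParts-Heis3}. The only cosmetic difference is that you organize the check via the generators of $\mathcal N_{\CC}(\Lambda_i)$ while the paper tests all pairs of representatives against arbitrary $C\in\mathcal N_{\CC}(\Lambda_i)$; this is an implementation detail, not a mathematical one.
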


\begin{proof} 
We use MAGMA to determine
  the set of all good cohomology classes in $H^1(\Heis(3), \mathbb C^3/\Lambda_i)$  for each lattice $\Lam_i$ as described in Remark \ref{rem:ActHe}. For each pair of representatives $\tau, \tau'$ of  good cohomology classes, we check whether there is a matrix $C \in \mathcal N_{ \mathbb C}(\Lam_i)$ and a vector $d\in A=\CC^3/\Lam_i$ according to Proposition \ref{prop:TParts-Heis3} such that the equation
\begin{align*}
	(\rho(u) - I_3)d = C \ast \tau(u) - \tau'(u) 
\end{align*}
holds for $u = g,h$. There is precisely one orbit  for each lattice $\Lambda_i$.
Hence,  there are exactly two biholomorphism classes. According to Proposition~\ref{prop:nothomeo}, they are topologically distinct.
\end{proof}


\subsection{The Case  $\ZZ_3^2$}
In contrast to the $\He(3)$ case, all 15 kernels  $K \in \mathcal K$ are invariant under the action of $\mathbb Z_3^2$ via $\rho$, so a priori, all kernels have to be considered. 
We will show next that the normalizer $N_{\Aut(E^3)}(\rho(\ZZ_3^2))$ acts on $\sK$. It will turn out that it is enough to consider one representative for each orbit and that kernels belonging to different orbits cannot correspond to isomorphic quotients.

\begin{prop} \label{prop:Z3^2-normalizer}
	Any biholomorphism $f \colon X \to X'$ between two quotients with group $\ZZ_3^2$, where the actions have only isolated fixed points, is induced by a biholomorphic map $\hat{f} \colon E^3 \to E^3$, $z \mapsto Cz+d$. This means that  $C$ is contained in the normalizer $N_{\Aut(E^3)}(\rho(\ZZ_3^2))$.  The normalizer group is finite of order $6^4=1296$ and is generated by the matrices
	\[
	\begin{pmatrix}
		-\zeta_3 & &  \\
		& 1 &  \\
		& & 1  
	\end{pmatrix}, \quad
	\begin{pmatrix}
		0 & 1 & 0 \\
		0 & 0 & 1 \\
		1 & 0 & 0
	\end{pmatrix} \quad \makebox{and} \quad 
	\begin{pmatrix}
		0 & 1 & 0\\
		1 & 0 & 0\\
		0 & 0 & 1
	\end{pmatrix}.
	\]
\end{prop}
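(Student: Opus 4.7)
I split the argument into two parts: first, showing that any biholomorphism $f\colon X \to X'$ lifts to a biholomorphism of the covering product $E^3$ (rather than just of the presenting abelian variety $A = E^3/K$), and second, computing the normalizer explicitly.

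For the first part, I would invoke Proposition~\ref{prop:ConsBieb} to obtain an affine lift $\tilde f(z) = Cz + d$ on $\CC^3$ together with an isomorphism $\varphi\colon \ZZ_3^2 \to \ZZ_3^2$ satisfying $C\rho(u) = \rho'(\varphi(u))C$. The content to verify is that $C(\ZZ[\ze_3]^3) = \ZZ[\ze_3]^3$, so that $\tilde f$ descends to a map between the covering tori. The key observation is that $\rho(h) = \diag(1,\ze_3^2,\ze_3)$ is diagonal with three pairwise distinct eigenvalues; conjugation by $C$ preserves the spectrum, so $\rho'(\varphi(h))$ has the same eigenvalues. In particular $\varphi(h) \notin \langle k\rangle$ (otherwise $\rho'(\varphi(h))$ would be scalar), and $\rho'(\varphi(h))$ is again diagonal with three distinct eigenvalues whose eigenspaces are the coordinate axes $\CC e'_i$. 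Hence $C$ permutes the coordinate axes $\CC e_i$. Combined with the identity $\CC e_i \cap \Lambda = \ZZ[\ze_3]\, e_i$, which holds because the kernel $K$ contains no nontrivial multiples of the unit vectors (Proposition~\ref{prop:Kfixed} together with the preceding discussion), this gives $C(\ZZ[\ze_3]\, e_i) = \ZZ[\ze_3]\, e_{\sigma(i)}$ for some $\sigma \in S_3$, hence $C \in \GL(3,\ZZ[\ze_3]) = \Aut(E^3)$ as required.

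For the second part, the same eigenspace argument applied to an arbitrary $C \in N_{\Aut(E^3)}(\rho(\ZZ_3^2))$ forces $C$ to be monomial: it must permute the eigenspaces of $\rho(h)$, and since $C\rho(h)C^{-1}$ has three distinct eigenvalues, it cannot be a scalar element of $\rho(\ZZ_3^2)$. Since $C$ and $C^{-1}$ have entries in $\ZZ[\ze_3]$, the nonzero entries must lie in $\ZZ[\ze_3]^\ast$, a cyclic group of order six. Conversely, every such monomial matrix is in the normalizer: conjugation of $\rho(h)$ by a permutation matrix simply permutes its diagonal entries $\{1,\ze_3^2,\ze_3\}$, and any such permutation again lies in $\rho(\ZZ_3^2)$, which equals the group of diagonal matrices $\diag(\ze_3^{a_1},\ze_3^{a_2},\ze_3^{a_3})$ with $a_1+a_2+a_3 \equiv 0 \pmod 3$. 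This yields $|S_3|\cdot|\ZZ[\ze_3]^\ast|^3 = 6\cdot 6^3 = 6^4 = 1296$. For the generation claim, the two given permutation matrices realize the $3$-cycle $(1\,3\,2)$ and the transposition $(1\,2)$, which generate $S_3$; the diagonal generator $\diag(-\ze_3,1,1)$ generates all $\diag(u,1,1)$ with $u \in \ZZ[\ze_3]^\ast$ because $-\ze_3$ has order six, and conjugating by the two permutation matrices then moves the distinguished unit entry to the remaining coordinates, after which products exhaust the full group of monomial matrices with unit entries.

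The main obstacle is the descent step in the first part: one must rule out that $C$ mixes coordinates nontrivially. This rests entirely on the three eigenvalues of $\rho(h)$ being distinct together with the lattice identity $\CC e_i \cap \Lambda = \ZZ[\ze_3]\, e_i$. Once the descent is established, the remainder is a routine calculation inside $\GL(3,\ZZ[\ze_3])$.
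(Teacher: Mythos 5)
Your proposal is correct and follows essentially the same route as the paper: lift via Proposition~\ref{prop:ConsBieb}, use the fact that $\rho$ splits into three pairwise inequivalent characters (equivalently, that $\rho(h)$ has distinct eigenvalues) to force $C$ to be monomial, deduce unit entries from $C\Lambda=\Lambda'$, and then identify the normalizer with the monomial matrices over $\ZZ[\ze_3]^{\ast}$. Your explicit justification of the descent to $E^3$ via $\CC e_i\cap\Lambda=\ZZ[\ze_3]e_i$ spells out a step the paper leaves implicit (note the relevant fact that $K$ contains no nonzero multiples of unit vectors comes from the discussion following Proposition~\ref{prop:isog} rather than Proposition~\ref{prop:Kfixed}).
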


\begin{proof}
According to Proposition~\ref{prop:ConsBieb}, the map $f$ is induced by an affine linear map $\hat{f}(z)=Cz+d$, where  $C\in \GL(3,\mathbb C)$ solves the equation 
$C\rho C^{-1}=\rho\circ \varphi$ for some $\varphi \in \Aut(\ZZ_3^2)$.   Since $\rho$ is the sum of three distinct one-dimensional representations, Schur's lemma implies that $C$ is the product of a permutation matrix $P_C$ and a diagonal matrix $D_C$. The condition $C \Lambda =\Lambda'$ then implies that the diagonal entries of $D_C$ belong to $\ZZ[\ze_3]^{\ast}=\langle -\ze_3\rangle$,
 hence, $C$ is well-defined as an automorphism of $E^3$ and belongs to $N_{\Aut(E^3)}(\rho(\ZZ_3^2))$. As $\varphi(k)=k$ and $\rho(h)$ is similar to $\rho(\varphi(h))$, the eigenvalues of $\rho(h)$ determine the possibilities for $\varphi$. Using this, we conclude that the group generated by the matrices $P_C$ with $C\in N_{\Aut(E^3)}(\rho(\ZZ_3^2))$ is isomorphic to $\mathfrak S_3$, and the claim follows.
 \end{proof}

\begin{rem}\label{rem:kernelsZ3^2}
	Looking at the explicit description of the normalizer $N_{\Aut(E^3)}(\rho(\ZZ_3^2))$ in the previous proposition, it is clear that the normalizer acts on $\sK$. 	
The proposition tells us that quotients corresponding to kernels of different orbits cannot be biholomorphic. 
On the other hand, it suffices to consider one representative of each orbit since the sets of possible actions on two abelian varieties defined by kernels in the same orbit are conjugated and therefore lead to the same  biholomorphism classes of quotients.
There are exactly four orbits of this action. In the sequel we fix the following representatives of these orbits:
	\[ K_1:=\{0\},\quad K_2:= \langle (t,t,0)\rangle,\quad K_3:=\langle (t,t,t)\rangle ,\quad K_4:=\langle (t,t,t), (t,-t,0)\rangle.\]
\end{rem}

Similar to  the Heisenberg case, we formulate a  criterion for a cocycle to be good.

\begin{lemma}
A cocycle $\tau \colon \mathbb Z_3^2 \to A$ in standard form is good if and only if the elements $a_i$ never  occur as $i$-th coordinate of an element in $K$ for all $i$.
\end{lemma}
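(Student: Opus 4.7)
The plan is to adapt the reasoning behind Lemma~\ref{le:freenessHe} to this simpler abelian setting. First I would note that because $\rho(k)=\ze_3\cdot I$ has no eigenvalue one, the element $k$ always has isolated fixed points on $A$, so the fixed locus of the action is automatically non-empty. Hence $\tau$ is good precisely when every non-trivial element whose analytic representation admits the eigenvalue one acts freely, since otherwise its fixed locus in $A$ would have positive dimension.

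Reading off $\rho(h)=\diag(1,\ze_3^2,\ze_3)$ and $\rho(k)=\diag(\ze_3,\ze_3,\ze_3)$, the non-trivial elements of $\ZZ_3^2$ whose analytic representation has an eigenvalue one are precisely the non-identity powers of $h$, $hk$ and $hk^2$, with the eigenvalue one located in the first, second and third coordinate, respectively. Since $\ZZ_3^2$ is abelian of exponent three, every element acts freely if and only if its inverse does, so it suffices to test the three generators $h$, $hk$, $hk^2$.

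For each of these generators I would use the cocycle identity together with $\tau(k)=0$ to obtain $\tau(hk^j)=\tau(h)=(a_1,a_2,a_3)$ for $j=0,1,2$, and then write down the fixed-point equation $(I-\rho(hk^j))z+\tau(hk^j)\in K$ in $E^3$. The $i$-th coordinate of this equation (where $i$ is the position of the eigenvalue one of $\rho(hk^j)$) collapses to the requirement $a_i\in p_i(K)$, while the other two coordinates can always be solved for the corresponding $z_\ell$ because multiplication by $\ze_3^k-1$ is a surjective isogeny on $E$. Taking the contrapositive for each $i=1,2,3$ yields the stated criterion.

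I do not foresee a serious obstacle: the argument is a direct unravelling of the fixed-point equation combined with the cocycle relations. The only point worth being careful about is the reduction to the three test elements $h$, $hk$, $hk^2$, which rests on the observation that in $\ZZ_3^2$ every element is its own conjugacy class and that inversion pairs up the remaining non-trivial elements of order three.
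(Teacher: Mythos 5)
Your argument is correct and follows essentially the same route as the paper's (much terser) proof: reduce goodness to the freeness of $h$, $hk$, $hk^2$ via the eigenvalue-one observation of Remark~\ref{rem:resolutions}, then unravel the fixed-point equation coordinatewise to get the condition $a_i\notin p_i(K)$. Your write-up merely makes explicit the details the paper leaves to the reader, including the automatic non-emptiness of the fixed locus coming from $k$.
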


\begin{proof}
	The cocycle $\tau$ is good if and only if the corresponding action $\Phi $ has isolated fixed points. This is precisely the case if the elements
	$h, hk$ and $hk^2$ act freely (see Remark~\ref{rem:resolutions}), which translates to the condition of the lemma.
\end{proof}

Our MAGMA code determines all possible actions, i.e., all good cocycles, and in particular a representative for each good cohomology class. The  number of the actions and the good classes is displayed in  the table  below.\\

	\begin{center}
	\bgroup\def\arraystretch{1.3}\begin{tabular}{|c|l|l| l |} \hline
			$i$  & representative $K_i$ of the orbit  &   \# of actions & \# of good classes in $H^1(\mathbb Z_3^2, E^3/K_i)$  \\ 
			\hline \hline
			$1$ & $\{0\}$ & $8$ & $8$\\ \hline
			$2$ & $\langle (t,t,0)\rangle$ & $36$ & $4$ \\
			$3$ & $\langle (t,t,t) \rangle$ & $54$ & $6$ \\ \hline
			$4$ & $\langle (t,t,t), (t,-t,0)\rangle$ & $54$ & $2$ \\ \hline
		\end{tabular}\egroup
	\end{center}\

Finally, we want to classify the quotients corresponding to the actions found in the previous step. We first prove that quotients of abelian varieties with different lattices of the above list are not homeomorphic and above all not biholomorphic. It suffices to show that for different lattices $\Lam$ and $\Lam'$, the set 
\[ \sN_\RR(\Lam,\Lam')=\{C\in\GL(2n,\RR)\mid C \Lam=\Lam',\: C\cdot\im(\rho_\RR)=\im(\rho'_\RR)\cdot C\}\]
is empty. For this, we use the structure of $\rho_\RR$:

\begin{rem}
	The decomplexification $\rho_\RR$ of the analytic representation $\rho$ of $\ZZ_3^2$ is given by
	\[\rho_\RR\colon \ZZ_3^2\longrightarrow \GL(6,\RR),\quad h^ak^b\mapsto \begin{pmatrix}
		B^b &&\\&B^{2a+b}&\\&&B^{a+b}
	\end{pmatrix},\quad \mathrm{where}\quad B=-\frac{1}{2} \begin{pmatrix} 1&\sqrt{3}\\-\sqrt{3}&1 \end{pmatrix}.\]
	The two-dimensional real representations $B^b,B^{2a+b},$ and $B^{a+b}$ are irreducible and pairwise not equi\-valent.	
\end{rem}

\begin{prop}\label{prop:nonHomeoZ3^2}
	Let $\Lam$ and $\Lam'$ be two different lattices of the above list. Then, the set $\sN_\RR(\Lam,\Lam')$ is empty. In particular, quotients of abelian varieties with different lattices are neither biholomorphic nor homeomorphic.
\end{prop}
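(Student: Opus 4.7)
The plan is to extract from any hypothetical $C \in \sN_\RR(\Lam,\Lam')$ an induced isomorphism between two small finite abelian groups that encode the \emph{fractional part} of the lattice, and then distinguish these groups via invariants preserved under the induced action. First, I would invoke Schur's lemma applied to the decomposition of $\rho_\RR$ into the three pairwise inequivalent two-dimensional real irreducibles recalled in the preceding remark. This forces $C$ to have block-permutation shape: there is $\sigma \in \mathfrak S_3$ such that $C$ sends $\CC e_j$ isomorphically onto $\CC e_{\sigma(j)}$ for every $j$, and each $2 \times 2$ block is an $\RR$-linear isomorphism of $\CC$, i.e., either multiplication by a complex scalar or complex conjugation composed with such a multiplication.

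Next, a short direct computation shows that for every lattice in the list, $\Lam \cap \CC e_j = \sO e_j$, where $\sO := \ZZ[\ze_3]$. Combined with $C(\Lam \cap \CC e_j) = \Lam' \cap \CC e_{\sigma(j)}$, this restricts each block of $C$ to an $\RR$-linear bijection $\sO \to \sO$, hence to multiplication by a unit of $\sO$, possibly post-composed with complex conjugation. Setting $\pi := 1 + 2\ze_3$ for a generator of the unique prime of $\sO$ above $3$, every such block also preserves $\pi^{-1}\sO$, so $C$ stabilises both $\sO^3$ and $(\pi^{-1}\sO)^3$. Since $t = -\pi^{-1}$, each $\Lam_i$ is contained in $(\pi^{-1}\sO)^3$, and therefore $C$ descends to a bijection of $(\pi^{-1}\sO/\sO)^3 \simeq \FF_3^3$ that carries $\Lam/\sO^3$ onto $\Lam'/\sO^3$. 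Because every unit of $\sO$ reduces to $\pm 1$ modulo $\pi$, this descended action lies in the signed permutation group $\{\pm 1\}^3 \rtimes \mathfrak S_3$.

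What remains is to compare the four finite subgroups
\[
\Lam_1/\sO^3 = 0, \quad \Lam_2/\sO^3 = \FF_3 \cdot (1,1,0), \quad \Lam_3/\sO^3 = \FF_3 \cdot (1,1,1), \quad \Lam_4/\sO^3 = \FF_3 \cdot (1,1,1) + \FF_3 \cdot (1,-1,0)
\]
of $\FF_3^3$ under the signed permutation action. Their $\FF_3$-dimensions $0, 1, 1, 2$ already separate $\{\Lam_1\}$, $\{\Lam_2, \Lam_3\}$ and $\{\Lam_4\}$. The main obstacle will be the pair $\Lam_2$ and $\Lam_3$, whose quotients are abstractly isomorphic as $\FF_3$-vector spaces. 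I plan to resolve this using the manifestly signed-permutation-invariant quantity ``minimal support size of a nonzero element of $\Lam/\sO^3$'', which equals $2$ for $\Lam_2/\sO^3$ and $3$ for $\Lam_3/\sO^3$. Consequently no such $C$ can exist between different lattices in the list, establishing the proposition.
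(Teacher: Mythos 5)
Your argument is correct and follows essentially the same route as the paper's proof: Schur's lemma forces $C$ to be a signed permutation (units of $\ZZ[\ze_3]$ times a coordinate permutation, possibly with conjugation), which descends to an isomorphism $\Lam/\ZZ[\ze_3]^3\to\Lam'/\ZZ[\ze_3]^3$, after which the index count and the support of $(t,t,0)$ versus $(t,t,t)$ rule out every pair. The only difference is presentational: the paper imports the structure of $C$ from \cite[Theorem~5.16]{BG21} and leaves the final impossibility implicit, whereas you justify both steps explicitly (via $\Lam\cap\CC e_j=\ZZ[\ze_3]e_j$ and the support invariant).
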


\begin{proof}
	Assume the converse, let $C\in\sN_\RR(\Lam,\Lam')$, and view it as a map $C\colon \CC^3\to\CC^3$. Following the proof of 
	\cite[Theorem~5.16]{BG21}, which is similar to the proof of Proposition~\ref{prop:Z3^2-normalizer}, 
	we see that $C$ is, up to a permutation of the coordinates, a sum of $\CC$-linear and $\CC$-antilinear maps:
	\[m_{w_j}(z)=w_jz \quad \mathrm{or}\quad \overline{m}_{w_j}(z)=w_j\bar{z},\quad \mathrm{where}\quad w_j\in\ZZ[\ze_3]^\ast.\]
	In particular, $C$ descends to a group isomorphism
	\[C\colon \bigslant{\Lam}{\ZZ[\ze_3]^3}\longrightarrow \bigslant{\Lam'}{\ZZ[\ze_3]^3}. \]
		By this, the sublattice $\ZZ[\ze_3]^3$ has to have the same index in $\Lam$ as in $\Lam'$. Therefore,  only the case $\Lam=\Lam_2$ and $\Lam'=\Lam_3$ remains. But 
		then, $C$ has to map $(t,t,0)$ to a generator of $\langle (t,t,t)\rangle$, which is impossible.
\end{proof}

\begin{prop}
Let $K$ be one of the four kernels $K_1,\ldots,K_4$, then, 
\[
\mathcal N_{\mathbb C}(\Lambda_K) = \lbrace C \in N_{\Aut(E^3)}(\rho(\mathbb Z_3^2)) \mid CK=K\rbrace.  
\]
\end{prop}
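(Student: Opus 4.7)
The plan is to verify the two inclusions separately. The inclusion $\supseteq$ is essentially a definition chase: any $C\in N_{\Aut(E^3)}(\rho(\ZZ_3^2))$ already preserves $\ZZ[\ze_3]^3$, and if in addition $CK=K$, then taking preimages under $\CC^3\to E^3$ shows that $C$ preserves $\Lambda_K$; together with the normalizing condition this places $C$ in $\sN_\CC(\Lambda_K)$.

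For the non-trivial direction $\subseteq$, I would take $C\in\sN_\CC(\Lambda_K)$ and show first that $C$ already lies in $\Aut(E^3)$, i.e.\ that $C\ZZ[\ze_3]^3=\ZZ[\ze_3]^3$. Once this is established, $C$ descends to an automorphism of $E^3$ permuting the finite subgroup $K=\Lambda_K/\ZZ[\ze_3]^3$, so $CK=K$ holds automatically. To place $C$ in $\Aut(E^3)$, I would reuse the Schur-type argument from the proof of Proposition~\ref{prop:Z3^2-normalizer}: because $\rho$ is the sum of three pairwise distinct one-dimensional characters and $C$ intertwines $\rho$ with some $\rho\circ\varphi$ for $\varphi\in\Aut(\ZZ_3^2)$, one decomposes $C=PD$ with $P$ a coordinate permutation (realising some $\s\in\frS_3$) and $D=\diag(d_1,d_2,d_3)$ with $d_j\in\CC^\ast$.

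The crux is then to force $d_j\in\ZZ[\ze_3]^\ast$. Here I would use the geometric fact already recorded just after Proposition~\ref{prop:isog}, namely that $K$ contains no non-zero multiple of a standard basis vector of $E^3$ (equivalently, $E_i\hookrightarrow A$ is injective). This translates into the lattice identity
\[ \Lambda_K\cap\CC\,e_i=\ZZ[\ze_3]\,e_i \]
for each $i$: a lattice vector supported on a single coordinate represents an element of $K$ supported on that coordinate and must therefore be trivial modulo $\ZZ[\ze_3]^3$. From $Ce_j=d_j e_{\s(j)}\in\Lambda_K\cap\CC\,e_{\s(j)}=\ZZ[\ze_3]e_{\s(j)}$ one extracts $d_j\in\ZZ[\ze_3]$; applying the same reasoning to $C^{-1}$, whose diagonal entries are $d_j^{-1}$, gives $d_j^{-1}\in\ZZ[\ze_3]$ as well, so $d_j\in\ZZ[\ze_3]^\ast$, and hence $C\in\Aut(E^3)$.

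I do not expect a serious obstacle: the heart of the proof is the interplay between the explicit shape $C=PD$ supplied by Schur's lemma and the elementary lattice identity above; both ingredients are already available in the paper, and the rest is bookkeeping.
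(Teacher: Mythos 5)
Your argument is correct and is essentially the paper's: the proof given there is a one-line reference to Proposition~\ref{prop:Z3^2-normalizer}, whose own proof is exactly your Schur-lemma decomposition $C=P_CD_C$ together with the observation that $C\Lambda_K=\Lambda_K$ forces the diagonal entries into $\ZZ[\ze_3]^{\ast}$, after which $C$ descends to $E^3$ and must stabilize $K=\Lambda_K/\ZZ[\ze_3]^3$. Your only addition is to make that last step explicit via the identity $\Lambda_K\cap\CC\,e_i=\ZZ[\ze_3]\,e_i$ (using that $K$ contains no non-zero multiples of unit vectors) applied to both $C$ and $C^{-1}$, a detail the paper leaves implicit.
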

\begin{proof}
The assertion follows directly from Proposition~\ref{prop:Z3^2-normalizer}.
\end{proof}

Similar to the Heisenberg case, we determine the possible translation parts of potential biholomorphisms:

\begin{prop} \label{prop:TParts-Z3^2}
Let $X$ and $X'$ be quotients of $A=\CC^3/\Lam$ corresponding to  good cocycles of  $\ZZ_3^2$ in standard form. Then, the translation part $d$  of a biholomorphism 
$\widehat{\alpha}\colon X \to X'$
 is one of the 27 fixed points of the automorphism $\ze_3\cdot \id_A$.
\end{prop}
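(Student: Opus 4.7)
The plan is to mimic the Heisenberg case (Proposition \ref{prop:TParts-Heis3}): specialise the compatibility condition from Remark \ref{rem:AffinitiesBieb}(b) to a well-chosen element whose linear part is a scalar matrix. The natural candidate is $u=k$, since $\rho(k)=\zeta_3\cdot\id$ and the translation part in standard form satisfies $\tau(k)=\tau'(k)=0$ by Remark \ref{rem:actionStandard}.

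The first step is to argue that every automorphism $\varphi\in\Aut(\ZZ_3^2)$ that actually arises from a matrix $C\in\sN_\CC(\Lam)$ via Remark \ref{rem:AffinitiesBieb}(a) must fix the generator $k$. Indeed, since $\rho(k)$ is a scalar, it commutes with $C$, so
\[\rho(\varphi(k))=C\rho(k)C^{-1}=\rho(k),\]
and faithfulness of $\rho$ forces $\varphi(k)=k$. Consequently $\varphi^{-1}(k)=k$ as well, which is the crucial input for the next step.

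Evaluating condition (b) of Remark \ref{rem:AffinitiesBieb} at $u=k$ then yields
\[(\rho(k)-\id)\,d=C\tau(\varphi^{-1}(k))-\tau'(k)=C\tau(k)-\tau'(k)=0\quad\text{in }A,\]
that is, $(\zeta_3-1)d=0$ in $A$. In other words, $d$ lies in the kernel of the endomorphism $\zeta_3\cdot\id_A-\id_A$, which is precisely the fixed locus of $\zeta_3\cdot\id_A$.

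The only remaining task is to verify that this fixed locus has cardinality $27$ for each of the four lattices $\Lam_{K_i}$. This is a bookkeeping step: apply the snake lemma to the short exact sequence $0\to K\to E^3\to A\to 0$ with the endomorphism $\zeta_3-1$, using Proposition \ref{prop:Kfixed} (which says $K\subseteq\Fix_{\zeta_3}(E)^3$, so $\zeta_3-1$ acts as zero on $K$) together with the fact that $\ker\bigl((\zeta_3-1)\colon E\to E\bigr)=\Fix_{\zeta_3}(E)$ has order $3$. The resulting count is $|K|\cdot 27/|K|=27$, independently of $K$. I do not expect any real obstacle here; the whole argument is parallel to Proposition \ref{prop:TParts-Heis3}, with the role of the centre of $\He(3)$ played by the (still central) scalar generator $k$ of $\ZZ_3^2$.
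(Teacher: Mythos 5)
Your argument is correct and is essentially the proof the paper intends: the proposition is stated without proof as \qq{similar to the Heisenberg case}, and the implicit argument is exactly yours --- evaluate Remark \ref{rem:AffinitiesBieb}(b) at $u=k$, where $\varphi(k)=k$ (forced here by $\rho(k)$ being scalar rather than by centrality) and $\tau(k)=\tau'(k)=0$ give $(\zeta_3-1)d=0$ in $A$. Your snake-lemma count confirming that $\Fix_{\zeta_3\cdot\id_A}(A)$ has exactly $27$ elements for every kernel $K$ is a detail the paper does not spell out, and it checks out since $K\subseteq\Fix_{\zeta_3}(E)^3$ by Proposition \ref{prop:Kfixed}.
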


\begin{prop}\label{prop:oneBihol}
For each of the four kernels $K_1,\ldots,K_4$, there exists one and only one 
biholomorphism class of singular three-dimensional quotients with trivial canonical class by  faithful, translation-free actions of $\ZZ_3^2$ having isolated fixed points. They are topologically distinct. 
\end{prop}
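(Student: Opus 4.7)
By Proposition~\ref{prop:nonHomeoZ3^2}, quotients corresponding to the four kernels $K_1,\ldots,K_4$ are pairwise topologically distinct, so the topological statement follows at once. It therefore remains to show that, for each fixed kernel $K_i$, every two good cocycles $\tau,\tau'\in Z^1(\ZZ_3^2, A_i)$ (with $A_i=\CC^3/\Lambda_{K_i}$) give biholomorphic quotients; equivalently, that the $\mathcal N_{\mathbb C}(\Lambda_{K_i})$-action on the set of good cohomology classes has a single orbit.

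The plan is to adapt the argument from Proposition~\ref{prop:Heis-Class} to the present setting. By the previous proposition, $\mathcal N_{\mathbb C}(\Lambda_{K_i})$ is the stabilizer of $K_i$ inside $N_{\Aut(E^3)}(\rho(\ZZ_3^2))$, the latter being explicitly described by three generators in Proposition~\ref{prop:Z3^2-normalizer}; hence it is a finite, effectively computable group. Moreover, any biholomorphism $\widehat{\alpha}\colon X\to X'$ is induced by an affine transformation $\alpha(z)=Cz+d$, and by Proposition~\ref{prop:TParts-Z3^2} the translation $d$ may be restricted to the 27-element set $\Fix_{\zeta_3\cdot\id}(A_i)$. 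Together with Remark~\ref{rem:AffinitiesBieb}, this reduces the biholomorphism problem to the finite search for $C\in\mathcal N_{\mathbb C}(\Lambda_{K_i})$ and $d$ in this 27-point set satisfying
\[
(\rho(u)-I_3)\,d \;=\; C\ast\tau(u)-\tau'(u) \qquad \text{for } u=h,k,
\]
which has to be verified in $A_i$.

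Concretely, I would run a MAGMA script in the spirit of the one already used for $\He(3)$: it enumerates the 8, 4, 6, 2 good cohomology classes in $H^1(\ZZ_3^2, A_i)$ for $i=1,2,3,4$ listed in the table of Section~\ref{sec:FineClass}, picks a representative in each, and for every ordered pair of representatives checks the above equation over all $(C,d)$ in the finite parameter set. The expected output is that, for each $i$, all representatives lie in the same orbit, yielding exactly one biholomorphism class per kernel. The Gorenstein and trivial canonical class properties follow because $\rho$ takes values in $\SL(3,\CC)$, as explained in Section~\ref{sec:preliminaries}.

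The main (and essentially only) obstacle is the sheer number of cases to verify computationally; conceptually the work is already done by Propositions~\ref{prop:Z3^2-normalizer}, \ref{prop:nonHomeoZ3^2} and~\ref{prop:TParts-Z3^2}. The least obvious collapse is for $K_1=\{0\}$, where eight good cohomology classes must merge into a single $\mathcal N_{\mathbb C}(\ZZ[\zeta_3]^3)$-orbit; here one should double-check that the full normalizer (which, by Proposition~\ref{prop:Z3^2-normalizer}, has order $1296$) is being used in the script, and that the coboundaries induced by the 27 admissible translations $d$ are correctly computed modulo $\Lambda_{K_1}$.
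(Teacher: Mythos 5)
Your proposal is correct and follows essentially the same route as the paper: the paper's proof is precisely a MAGMA verification, in analogy with Proposition~\ref{prop:Heis-Class}, that $\mathcal N_{\mathbb C}(\Lambda_{K_i})$ acts transitively on the good cohomology classes for each kernel, combined with Proposition~\ref{prop:nonHomeoZ3^2} for the topological distinctness. Your additional detail on the finite search space (the equation from Remark~\ref{rem:AffinitiesBieb} for $u=h,k$, the $27$ admissible translations, and the stabilizer description of $\mathcal N_{\mathbb C}(\Lambda_{K_i})$) matches how that computation is set up in the Heisenberg case.
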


\begin{proof} 
In analogy to Proposition~\ref{prop:Heis-Class}, we use MAGMA to verify that $\mathcal N_{\mathbb C}(\Lambda_{K_i})$ acts transitively on the special cohomology classes in $H^1(\mathbb Z_3^2, E^3/K_i)$ for each kernel $K_i$. Proposition~\ref{prop:nonHomeoZ3^2} completes the proof.
 \end{proof}


\section{Proof of the Main Theorem}

In this final section, we provide a quick roadmap through  the proof of Theorem~\ref{theo:MainTheorem}.

\begin{proof}[Proof of Theorem~\ref{theo:MainTheorem}]
	Let $G$ is a finite group admitting a holomorphic and translation-free action on a three-dimensional abelian variety $A$ such that the fixed locus is non-empty and finite and the volume form is preserved. Theorem~\ref{theo:SimplyConnected} together with Remark~\ref{rem:RY} settles the case where the corresponding quotient $X=A/G$ is simply connected: here, the group is cyclic of order $3$ or $7$ and the quotient is isomorphic to $Z_2$ or $Z_1$.
	If the quotient $X=A/G$ has non-trivial fundamental group, then $G$ is either $\ZZ_3^2$ or $\He(3)$  by Theorem~\ref{theo:NotSimply}. 
Remark~\ref{rem:resolutions} includes the description of the fundamental groups of the quotients and tells us that all these quotients are uniformized by $Z_2$. The degree of the universal cover equals the index of $\ZZ_3$ in $G$, hence, these quotients have $81/\lvert G \rvert$ singularities of type $\tfrac{1}{3}(1,1,1)$. Moreover, the remark explains that the quotients have rigid Calabi-Yau three-folds as crepant resolutions.\\
The fine classification of the quotients by the groups $\ZZ_3^2$ and $\He(3)$ can be summarized as follows: Theorem~\ref{theo:NotSimply} describes the analytic representations of both groups. From this, we can deduce the structure of the abelian variety $A$: it is the quotient of three copies of the Fermat elliptic curve $E$  by a subgroup $K$ of $E[3]^3$, which is the kernel of an isogeny given by  addition  (cf. Propositions~\ref{prop:isog} and \ref{prop:Kfixed}). In the case $G=\He(3)$, there are two possibilities for $K$, see Remark~\ref{rem:ActHe}. This remark also explains how to determine the translations parts of all possible actions.  Finally, Proposition~\ref{prop:Heis-Class} settles the classification in the case $G=\He(3)$. For the group $G=\ZZ_3^2$, four different kernels have to be considered as explained in Remark~\ref{rem:kernelsZ3^2}. In Proposition~\ref{prop:oneBihol}, we prove that for each of these kernels, there exists precisely one biholomorphism class. Quotients of abelian varieties corresponding to different kernels can neither be biholomorphic nor homeomorphic (cf. Propositions~\ref{prop:Z3^2-normalizer} and \ref{prop:nonHomeoZ3^2}). This finishes the fine classification of the quotients.
\end{proof}


\end{document}